\newtheorem{definition}{Definition}[section]
\newtheorem{theorem}{Theorem}[section]
\newtheorem{lemma}{Lemma}[section]
\newtheorem{remark}{Remark}[section]
\newtheorem{proposition}{Proposition}[section]
\newtheorem{assumption}{Assumption}[section]
\def\BibTeX{{\rm B\kern-.05em{\sc i\kern-.025em b}\kern-.08em
T\kern-.1667em\lower.7ex\hbox{E}\kern-.125emX}}
\begin{document}
\title{Distributed Coverage Control on Poriferous Surface via Poly-Annulus Conformal Mapping}
\author{Xun Feng, Chao Zhai, \IEEEmembership{Senior Member, IEEE}

\thanks{This work is supported by the ``CUG Scholar" Scientific Research Funds at China University of Geosciences (Wuhan) (Project No. 2020138).}
\thanks{Xun Feng and Chao Zhai are with School of Automation, China University of Geosciences, Wuhan 430074, China, and with Hubei Key Laboratory of Advanced Control and Intelligent Automation for Complex Systems and also with Engineering Research Center of Intelligent Technology for Geo-Exploration, Ministry of Education (Corresponding author: Chao Zhai, email: zhaichao@amss.ac.cn).}}

\maketitle

\begin{abstract}
The inherent non-convexity of poriferous surfaces typically entraps agents in local minima and complicates workload distribution. To resolve this, we propose a distributed diffeomorphic coverage control framework for the multi-agent system (MAS) in such surfaces. First, we establish a distributed poly-annulus conformal mapping that transforms arbitrary poriferous surfaces into a multi-hole disk. Leveraging this topological equivalence, a collision-free sectorial partition mechanism is designed in the multi-hole disk, which rigorously induces strictly connected subregions and workload balance on the poriferous surfaces. This mechanism utilizes a buffer-based sequence mechanism to ensure strict topological safety when bypassing obstacles. Furthermore, a pull-back Riemannian metric is constructed to define the length metric that encodes safety constraints. Based on this metric, a distributed gradient-based control law is synthesized to drive agents toward optimal configurations, ensuring simultaneous obstacle avoidance and coverage optimization. Theoretical analyses guarantee the Input-to-State Stability (ISS) of the partition dynamics and the asymptotic convergence of the closed-loop system. Numerical simulations confirm the reachability and robustness of the proposed coverage algorithm, offering a scalable solution for distributed coverage in poriferous surfaces.
\end{abstract}

\begin{IEEEkeywords}
Coverage control, distributed control, conformal mapping, surface coverage.
\end{IEEEkeywords}

\section{Introduction}
\label{Introduction}
The rapid increase of low-cost smart agents has catalyzed significant advancements in multi-agent coordination, enabling complex missions such as environmental monitoring, cooperative search, and precision agriculture~\cite{Cao11, vehicle23}. Among these, coverage control stands out as a fundamental problem, aiming to optimally deploy a sensor network to monitor a field of interest with respect to a specific density function~\cite{sun19}. Graph theoretic methods~\cite{mesbahi} and cooperative control~\cite{zhai21} have provided solid foundations for these tasks.

Early research extensively explores coverage control in convex domains by utilizing Voronoi tessellation to partition the region and deploying Lloyd's algorithm to drive agents to the centroids~\cite{Cortes04, Pimenta08}. However, standard Euclidean metrics often leads to local minima in non-convex domains. To address non-convexity, Bhattacharya et al.~\cite{Unknown13} employs grid-based wavefront propagation to approximate geodesic Voronoi partitions. Nevertheless, the grid resolution limits coverage precision, thereby preventing the generation of continuous gradient flows on smooth manifolds. Although Bhattacharya et al~\cite{SB14} explore coverage on Riemannian manifolds, their approach incurs prohibitive computational costs and lacks mechanisms for workload balance. In contrast, geodesic Voronoi partitions are proposed to achieve equitable workload distribution~\cite{Cortes10}. Similarly, Palacios-Gasós et al.~\cite{equitable19} integrate power diagrams with graph-based planning to generate sweep paths. Yet, this method requires complex geometric approximations. Sectorial partitions~\cite{zhai23} and hierarchical protocols~\cite{porous} offer alternative strategies but impose restrictive constraints on obstacle shapes. More recently, Surendhar et al.~\cite{obstacles} integrate adaptive estimation with artificial potential fields (APF) for obstacle avoidance. Such APF-based methods are prone to trapping agents in local minima in poriferous surfaces.

To simplify the environmental topology, conformal geometry~\cite{Gu08} offers a rigorous mathematical pathway. Caicedo-Nunez et al.~\cite{perform08} construct a diffeomorphism that maps physical obstacles to single points in a punctured convex region. Unfortunately, mapping solid obstacles to points induces singularities at the boundaries. Lekien and Leonard~\cite{Nonuniform08} utilize density-equalizing cartograms, but their diffusion-based approach is primarily limited to simply connected domains. While harmonic maps can transform workspaces into punctured disks~\cite{HM18}, the solution to Laplace equation relies on global boundary conditions, resulting in a centralized computation process. Fan et al.~\cite{CM23} develop conformal navigation transformations to map obstacles into a sphere world. However, the navigation functions stabilize agents at single goal points, rendering them unsuitable for the workload balance and centroid optimization. Vlantis et al.~\cite{obstacle23} combine harmonic maps with cell decomposition, yet this requires explicit space discretization, leading to discontinuous control laws or complex switching logic between partition cells. Similarly, the neighborhood-augmented graph search~\cite{SB23} focuses on finding distinct paths for single robots, lacking the capacity for multi-agent workload balance. Xu et al.~\cite{Xu20} propose time-varying diffeomorphisms for deforming domains, but it is restricted to simply connected regions and relies on centralized implementation. While combining quasi-conformal mappings with control barrier functions~\cite{Choi25} enables reactive navigation, these methods rely on local safety constraints. Consequently, it lacks the global gradient guidance required to the optimal coverage configuration. Above all, existing methods fail to provide a distributed solution for optimal coverage with workload balance on poriferous surfaces.

To overcome these limitations, this paper proposes a fully distributed diffeomorphic coverage control framework. Inspired by the centralized multiply-connected mapping~\cite{multiconnected21}, we construct a poly-annulus conformal mapping that transforms a poriferous surface into a multi-hole disk. This transformation allows to generalize the efficient sectorial partition to poriferous surfaces. Unlike traditional methods, our approach incorporates a buffer-based sequence mechanism to ensure strict topological safety. Furthermore, by utilizing partial welding techniques~\cite{welding} and linear Beltrami solvers~\cite{rectangular13}, we achieve this in a distributed manner. The Riemannian geometry principles~\cite{Riemannian geometry} and classical Koebe's iteration~\cite{Koebe} are leveraged to design a length metric that guarantees convergence. 
In brief, the core contributions of this work are threefold:
\begin{enumerate}
\item Construct a poly-annulus conformal mapping to regularize arbitrary poriferous surface into the multi-holed disk in a distributed manner.
\item Establish a modified partition mechanism to ensure strict topological safety and workload balance, which guarantees input-to-state stability of partition dynamics against geometric perturbations.
\item Synthesize a distributed control law that guarantees both collision-free navigation and asymptotic convergence to optimal coverage centroids by leveraging the pull-back length metric that encodes safety constraints.
\end{enumerate}

The remainder of this paper is organized as follows. Section~\ref{section2} mathematically formulates the coverage control problem on poriferous surface. Section~\ref{section3} details the distributed construction of poly-annulus conformal mapping. Section~\ref{section4} presents the main theoretical results, including the safe partition algorithm, Riemannian metric design, and stability analysis. Numerical simulations are provided in Section~\ref{section5}, and Section~\ref{section6} concludes the paper. 

\begin{figure}[t!]
\centering
\includegraphics[width=0.45\textwidth]{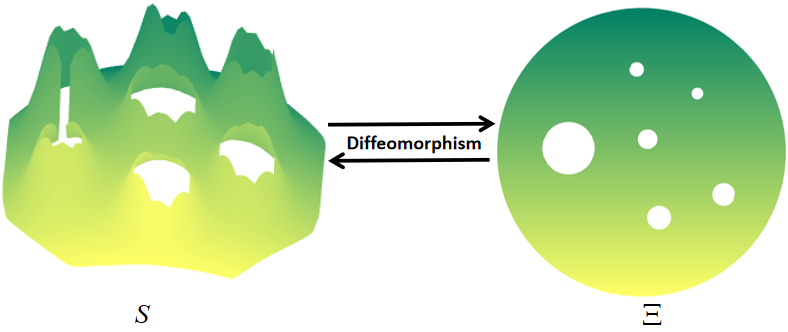}
\caption{\label{Fig.1} Topological relationship between the surface \(S\) and $n$-holed disk \(\Xi\). The color gradient illustrates the preservation of density distribution $\rho$.} \label{Fig.1.}
\end{figure}

\section{Problem Formulation}\label{section2}

This section formulates the coverage control problem of MAS on poriferous surface with boundary.
\subsection{Mathematical Description of Coverage Region}

Let $S$ be a compact and orientable 2-dimensional Riemannian manifold with boundary embedded in $\mathbb{R}^3$, which can be topologically viewed as a poly-annulus (see Fig.~\ref{Fig.1.}). The manifold $S$ represents the feasible workspace constrained by a set of $n$ disjoint obstacles, denoted as $\mathcal{O} = \{O_1, \dots, O_n\}$. Coordinating multi-agent coverage directly on $S$ is intractable, due to the metric distortion induced by the irregular geometries of obstacle boundaries $\partial O_k$. As demonstrated in \cite{Choi25}, the problem can be simplified by mapping the robot workspace to the ball world, specifically an $n$-holed disk $\Xi$ (i.e., a unit disk with radius $R=1$, perforated by $n$ disjoint circular obstacles). Since $S$ is homeomorphic to $\Xi$, we construct a diffeomorphism $\tau: S \to \Xi$ to convert the coverage region into the ball world. As $S$ is compact and diffeomorphism \(\tau\) preserves compactness, \(\Xi\) is also compact. We define the polar coordinates in the ball world as $(\hat{r}, \hat{\theta})$ to distinguish them from those in the original robot workspace. The $k$-th obstacle within the ball world $\Xi$ is uniquely characterized by its center $\hat{o}_k$ and radius $\hat{r}_k$. Consequently, the feasible region of the ball world is a $n$-holed disk, which is defined in vector form as: 
\begin{equation} \label{equation1}
\Xi = \left\{ \hat{p} \in \mathbb{R}^2 \middle| \|\hat{p}\| \le 1, \text{ s.t. } \|\hat{p} - \hat{o}_k\| > \hat{r}_k, \forall k \in \mathbb{I}_n \right\},
\end{equation}
where $\hat{p} = (\hat{r}, \hat{\theta})$ denotes the coordinate of agent in $\Xi$ and \(\mathbb{I}_N=\{1,2,...,N\}\). The inequality constraints ensure that the workspace is confined within \(\Xi\) while remaining disjoint from all internal circular obstacles.

Consider a network of $N$ agents deployed within the robot workspace $S$. The spatial distribution of workload is modeled by a continuous density function $\rho: S \to \mathbb{R}_{\ge 0}$. To achieve workload balance in \(\Xi\), we transport the workload distribution to $\Xi$ via the diffeomorphism $\tau$. Specifically, if $\tau$ scales lengths by a conformal factor $\lambda$, the density in $\Xi$ is given by
\begin{equation}
\hat{\rho}(\boldsymbol{\hat\xi}) = \rho(\tau^{-1}(\boldsymbol{\hat\xi})) \cdot \det(J_{\tau^{-1}}(\boldsymbol{\hat\xi})), \quad \forall \boldsymbol{\hat\xi} \in \Xi,
\end{equation}
where $J_{\tau^{-1}}(\hat{\xi})$ denotes the Jacobian of \(\tau^{-1}\). Note that the density within the obstacle regions is defined as zero. The sectorial partition can be implemented to divide the region \(\Xi\) and balance the workload of subregions~\cite{porous}. Each agent is assigned with a virtual partition bar, parameterized by an angular phase $\hat{\psi}_i \in [0, 2\pi)$. And the partition phases are numbered sequentially as $0 \le \hat{\psi}_1 < \hat{\psi}_2 < \dots < \hat{\psi}_N < 2\pi$. Consequently, the ball world is decomposed into $N$ sectors $\Xi = \bigcup_{i=1}^N \hat{E}_i$, where $\hat{E}_i$ is the region bounded by the angles $[\hat{\psi}_i, \hat{\psi}_{i+1})$. The dynamics of the $i$-th agent is presented as
\begin{equation}\label{dp}
\dot{\mathbf{p}}_i = \mathbf{u}_i
\end{equation}
with the control input $\mathbf{u}_i \in T_{p_i}S$, \(\forall i\in \mathbb{I}_N=
\{1,2,...,N\}\). While agents operate in the robot workspace $S$, the partition dynamics is computed in the ball world $\Xi$. Concretely, the workload $\hat{m}_i$ in \(\Xi\) is calculated by
\begin{equation}\label{mi}
\hat{m}_i = \int_{\hat{\psi}_i}^{\hat{\psi}_{i+1}} \hat{\omega}(\hat{\theta}) d\hat{\theta}, \quad \hat{\omega}(\hat{\theta}) = \int_{0}^{1} \hat{\rho}(\hat{r}, \hat{\theta}) \hat{r} d\hat{r},
\end{equation}
where $\hat{\omega}(\hat{\theta})$ represents the marginal density along the angular coordinate in $\Xi$. In addition, the radial integration is performed over $[0, 1]$, as the density $\hat{\rho}$ vanishes within the internal obstacles. To balance the workload among subregions, the dynamic of the \(i\)-th partition bar in \(\Xi\) is designed as
\begin{equation}\label{partitioning dynamic} 
\dot{\hat{\psi}}_i = k_{\hat{\psi}} (\hat{m}_i - \hat{m}_{i-1}), \quad \forall i \in \mathbb{I}_N, \end{equation}
where $k_{\hat{\psi}}$ is a positive constant, and \({\hat{\psi} _i}=\bmod ( {{\hat{\psi} _i},2\pi })\) is employed to ensure \({\hat{\psi} _i} \in [0,2\pi )\) with periodic boundary conditions $\hat{\psi}_{N+1} = \hat{\psi}_1 + 2\pi$ and $\hat{m}_0 = \hat{m}_N$. 

\subsection{Optimization Formulation for Multi-Agent Coverage}
To rigorously quantify the coverage performance, we define a global cost function directly in the robot workspace $S$. Inspired by the distortion energy framework in~\cite{Cortes04}, the performance index is formulated 
as
\begin{equation}\label{cost}
J(\hat{\boldsymbol{\psi}}, \mathbf{p}) = \sum\limits_{i = 1}^N {\int_{E_i(\hat{\boldsymbol{\psi}})} {f_i\left( {d_l(p_i, q)} \right) \rho(q) dq} },
\end{equation}
where $\mathbf{p} = (p_1,\dots,p_N)^T$ denotes the position vector of agents in \(S\), and $\hat{\boldsymbol{\psi}} = (\hat{\psi}_1, \dots, \hat{\psi}_N)^T $ represents the vector of partition phase angles in ball world \(\Xi\), which determines the workspace subregions via the inverse mapping $E_i(\hat{\boldsymbol{\psi}}) = \tau^{-1}(\hat{E}_i(\hat{\boldsymbol{\psi}}))$. In addition, $d_l(p_i, q)$ refers to the length metric designed in Subsection~\ref{Design of Length metric} to encode obstacle avoidance. To ensure the well-posedness of the performance index~\eqref{cost}, $f_i(\cdot)$ satisfies the following assumption~\cite{Cortes10}. 
\begin{assumption}[Admissible Performance Functions] \label{assf}
We require $f_i(\cdot)$ to satisfy:
\begin{enumerate}
    \item $f_i \in C^2([0, \infty))$.
    \item $f_i'(x) > 0$ and $f_i''(x) > 0$ for all $x \ge 0$. 
    \item $f_i(0)=0$ and $\lim_{x\to \infty} f_i(x) = \infty$. 
\end{enumerate}
A standard example satisfying these criteria is $f(x)=x^2$.
\end{assumption}
Our objective is to optimize coverage performance with workload balance over the robot workspace $S$. Since the diffeomorphism $\tau$ establishes a topological equivalence between the two domains, solving the coverage problem in the robot workspace $S$ is equivalent to solving its counterpart in the ball world $\Xi$. (see Theorem~\ref{the3}). Specifically, we design a distributed control law for the agent dynamics~\eqref{dp} in $S$ that utilizes the partition dynamic \eqref{partitioning dynamic} in $\Xi$ to drive agents toward their optimal positions.  Inspired by the coverage framework in~\cite{Cortes10}, the optimization problem is formulated as follows:
\begin{equation}\label{minJ}
\begin{split}
& \quad \quad \quad \min_{\hat{\mathbf{\psi}}, \mathbf{p}} J(\hat{\mathbf{\psi}}, \mathbf{p}) \\
s.t. \quad & |\hat{m}_i - \hat{m}_j| \le \delta(\beta), \quad \forall i,j \in \mathbb{I}_N \
\end{split}
\end{equation}
where $\delta(\beta)$ represents the workload deviation arising from topological safety constraints. The primary challenge of Problem \eqref{minJ} stems from the non-convexity and complex topology of the workspace. Our framework addresses these challenges by regularizing the workspace topology within the ball world while utilizing the conformal mapping and the pull-back length metric to ensure physical fidelity.
\begin{figure*}[t!]
\centering
\includegraphics[width=0.9\textwidth]{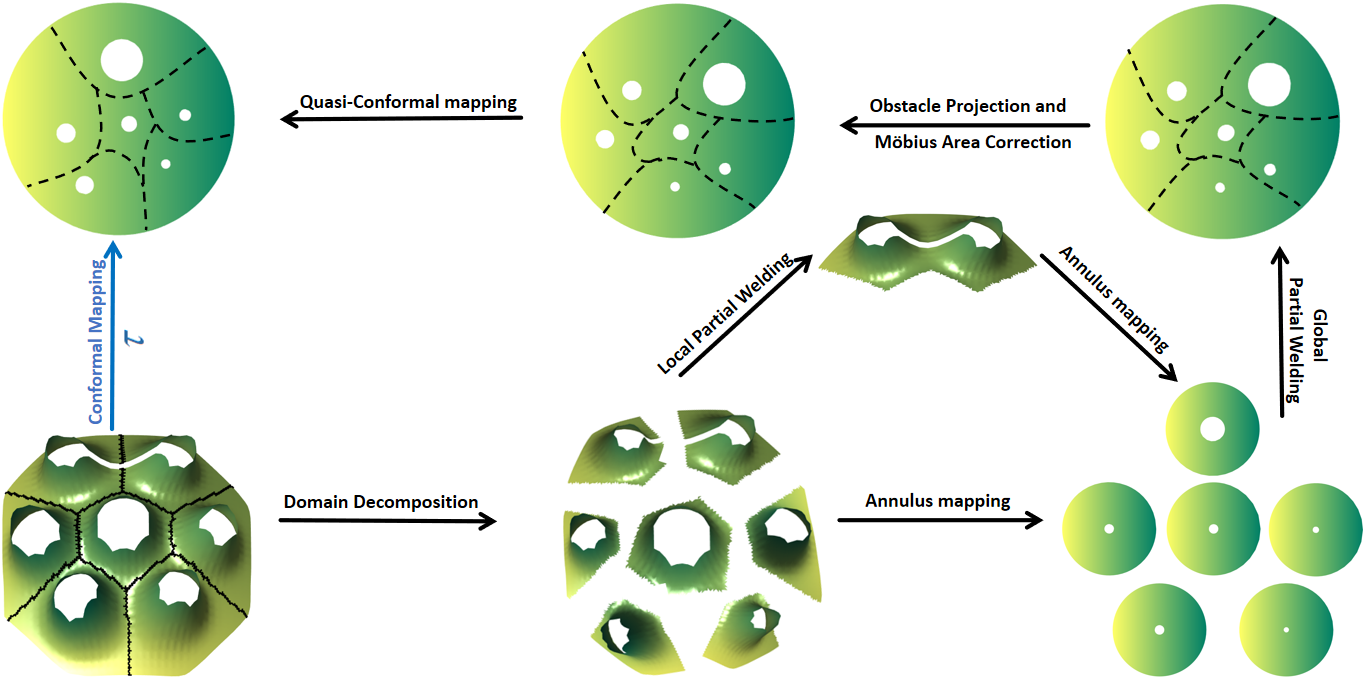}
\caption{\label{png: MAPProcess1} The workflow of Algorithm \ref{Algorithm3}. The process decomposes the complex topology into local sub-problems, stitches them via partial and global welding, and applies geometric rectification to generate a global coordinate system for the $n$-holed disk \(\Xi\).}
\label{Fig.2}
\end{figure*}

\section{Construction of Conformal Mapping}\label{section3}
This section details the construction of poly-annulus conformal mapping $\tau: S \to \Xi$. We employ a partition operator $\mathcal{P}$ to decouple $S$ into subdomains, reducing the global mapping into parallelizable local sub-problems. These local maps are integrated via partial welding and geometric rectification to yield a $n$-holed disk suitable for sectorial partition.

\subsection{Domain Decomposition} \label{sectionA}
To facilitate the distributed construction of poly-annulus conformal mapping, we establish the existence of a partition operator $\mathcal{P}$ to reduce the topological complexity of $S$. 
\begin{definition}[Partition Operator Admissibility]
A partition operator $\mathcal{P}$ that decomposes $S$ into $M$ disjoint subdomains $\{S_k\}_{k=1}^M$ ($M\ge n$) is said to be admissible if every $S_k$ is homeomorphic to either a unit disk $\mathbb{D}$ or a unit annulus $\mathbb{A}$.
\end{definition}
Based on the Triangulation Theorem and topological surgery~\cite{Gu08}, $S$ can be segmented into a finite set of simply connected patches and annular patches. This ensures the existence of an admissible operator $\mathcal{P}$. With existence established, the Geodesic Voronoi Tessellation (GVT) provides a feasible realization of $\mathcal{P}$ by designing the generator set $\mathcal{V}$. We define a generator set $\mathcal{V} = \{v_1, \dots, v_M\} \subset S$. For a star-shaped obstacle $O_k$, we assign a single generator $v_k$ in its vicinity to induce an annular cell. For a non-star-shaped obstacle $O_j$, a cluster $\mathcal{V}_j \subset \mathcal{V}$ ($|\mathcal{V}_j| \ge 2$) is distributed around $O_j$ to partition its surrounding area into multiple simply connected cells. By strictly adhering to the generator design, each subdomain $S_k$ is either Type I (Simply Connected) or Type II (Annulus containing exactly one obstacle $O_k$).  

\subsection{Local Partial Welding}  \label{sectionB}
For a non-star-shaped obstacle $O_k$ associated with multiple generators $\mathcal{V}_k$, GVT segments $S_k$ into multiple simply connected patches $\{S_{k,l}\}_{l=1}^L$, where $L = |\mathcal{V}_k| \ge 2$ denotes the number of generators assigned to $O_k$. It collectively encloses $O_k$. For notational simplicity, the subscript $k$ is omitted. Let $S_u, S_v \in \{S_{k,l}\}_{l=1}^L$ be two adjacent patches sharing a common interface $\gamma_{uv} = \partial S_u \cap \partial S_v$. And let $h_u: S_u \to \mathbb{D}$ and $h_v: S_v \to \mathbb{D}$ be their respective Riemann mappings to the unit disk $\mathbb{D}$~\cite{Riemannian geometry}. Since $h_u$ and $h_v$ are computed independently, the images of the common interface, $h_u(\gamma_{uv})$ and $h_v(\gamma_{uv})$ do not inherently coincide on the unit circle $\partial\mathbb{D}$. To seamlessly stitch 
the domain $\mathbb{D}_v$ to $\mathbb{D}_u$, we seek an optimal Möbius transformation $\Theta_{uv}^* \in \text{Aut}(\mathbb{D})$ that aligns the boundary values by minimizing the welding energy functional~\cite{welding}
\begin{equation} \label{lwe}
\Theta_{uv}^* = \arg\min_{\Theta\in\text{Aut}(\mathbb{D})} \int_{\gamma_{uv}}\left|h_u(p)-\Theta(h_v(p)) \right|^2 ds(p),
\end{equation}
where 
$$
\text{Aut}(\mathbb{D}) = \left\{ z \mapsto e^{i\phi} \frac{z - a}{1 - \bar{a}z} \mid \phi \in [0, 2\pi), a \in \mathbb{D} \right\}
$$ 
denotes the group of disk automorphisms, and $ds(p)$ represents the arc length element along the interface curve $\gamma_{uv}$. 
By propagating $\Theta_{uv}^*$ along the boundary values across all patches, we define the composite mapping $\Theta_{1 \to l}^* = \Theta_{1,2}^* \circ \dots \circ \Theta_{l-1,l}^*$ and a piecewise unified mapping $U: \bigcup_{l=1}^L S_l \to \vartheta_k$. For any point $p$ belonging to $S_l$, the map is explicitly given by
\begin{equation*} \label{unim}
U(p) = 
\begin{cases} 
h_1(p), &  l=1, \\
\Theta_{1,2}^* \circ h_2(p), &  l=2, \\
\Theta_{1 \to l}^* \circ h_l(p), &   l > 2.
\end{cases}
\end{equation*}
Since the union of patches $\{S_{k,l}\}_{l=1}^L$ surrounds the hole $O_k$, the resulting image \( {{\vartheta _k}} \) is a topological annulus. An example is provided in Appendix~\ref{app1}.

\subsection{Parallel Annulus Mapping}  \label{sectionC}
This step is applicable to both the Type II subdomains and the topological annulus \( {{\vartheta _k}} \) derived in Section~\ref{sectionB}. Let $\varpi _k$ denote the topological annulus enclosing the $k$-th obstacle $O_k$, which is defined as
\begin{equation*}
\varpi _k = 
\begin{cases} 
S_k, & \text{if } O_k \text{ is in a Type II subregion}, \\
\vartheta_k, & \text{if } O_k \text{ is surrounded by welded patches}.
\end{cases}
\end{equation*}
By leveraging the annulus conformal map algorithm in~\cite{multiconnected21}, we compute a conformal map $f_k: \varpi _k \to \mathbb{A}_k$ that transforms $\varpi _k$ onto a circular annulus
\begin{equation} \label{annulus}
\mathbb{A}_k = \{z \in \mathbb{C} \mid r_k < \|z\| < 1\},
\end{equation}
where $r_k \in (0,1)$ is the conformal modulus of $\varpi _k$, which is uniquely determined by the domain's geometry. 

\begin{remark}
Due to the domain decomposition established by $\mathcal{P}$, the global topological complexity is decoupled into $n$ independent annular sub-problems. Consequently, for MAS, the computation of $f_k$ and $r_k$ for each obstacle $O_k$ are performed in parallel, which allows agents to solve local mappings without global information. 
\end{remark}

\subsection{Global Partial Welding}  \label{sectionD}
After the parallel computation in Section~\ref{sectionC}, we obtain a set of $n$ disjoint annuli $\{\mathbb{A}_1, \dots, \mathbb{A}_n\}$. To assemble these local annuli while preserving the conformal structure, we introduce the partial welding operator $T_k$ for the $k$-th annulus. Since the relative position and orientation of the holes must be adjusted without distorting their local shapes, $T_k$ is restricted to be a similarity transformation $T_k(z) = \alpha_k z + \gamma_k$, $\alpha_k\in\mathbb{C} \setminus\{0\}$, $\gamma_k\in\mathbb{C}$, where $\alpha_k$ controls the rotation and scaling, and $\gamma_k$ determines the translation of the $k$-th annulus in the complex plane. Let $\iota_{ij} = \partial \varpi_i \cap \partial \varpi_j$ denote the shared interface between adjacent regions $\varpi_i$ and $\varpi_j$. The optimal configuration is determined by minimizing the global stitching error
\begin{equation} \label{eg}
{E_g} = \sum\limits_{(i,j) \in \mathcal{E}} {\int_{{\iota _{ij}}} {{{\left| {{T_i}({f_i}(p)) - {T_j}({f_j}(p))} \right|}^2}ds(p)} },
\end{equation}
where $\mathcal{E}$ represents the set of adjacency indices, and $ds(p)$ denotes the arc length element along $\iota_{ij}$. To eliminate global ambiguity, we set $T_1(z) = z$ as the reference frame ($\alpha_1=1$, $\gamma_1=0$). The minimization of~\eqref{eg} is a linear least-squares problem with respect to the parameters $\{\alpha_k, \gamma_k\}_{k=2}^n$, which can be solved efficiently~\cite{welding}. Solving this linear system yields the optimal parameters $\{\alpha_k^*, \gamma_k^*\}_{k=1}^n$ and the corresponding global welding $g_{k}: \varpi _k \to \mathbb{C}$:
\begin{equation}
g_{k}(p) = T^*_k(f_k(p)), \quad \forall  p \in \varpi_k.
\end{equation}
This optimization glues the discrete annuli $\{\mathbb{A}_1, \dots, \mathbb{A}_n\}$ into a coherent domain $\Xi = \bigcup_{k=1}^n T_k^*(\mathbb{A}_k)$.  

\subsection{Geometric Rectification}  \label{sectionE}
The global partial welding constructed in Section~\ref{sectionD} provides a set of piecewise mappings $\{g_k\}_{k=1}^n$. Although $g_k$ ensures topological invariance, discrete stitching errors and conformal distortions may persist. To generate a high-quality conformal mapping $\tau: S \to \Xi$, we execute a three-stage geometric rectification process. First, we strictly enforce the circularity of the inner boundaries by computing the canonical circle  (with center $c_k$ and radius $r_k$) for \({g_k}\left( {\partial {O_k}} \right)\).
The boundary mapping $\hbar_k $ is defined by projecting the boundary points onto these canonical circles as follows:
\begin{equation} \label{tbm}
\hbar_k(p) = c_k + r_k \frac{g_k(p) - c_k}{|g_k(p) - c_k|}, \quad \forall p \in \partial O_k.
\end{equation}
Second, to mitigate non-uniform area distortion, a global Möbius transformation $M(z, \varsigma ) = \frac{z - \varsigma}{1 - \bar{\varsigma}z}$ with $\varsigma \in \mathbb{D}$ is applied. We optimize the parameter $\varsigma$ to minimize the variance of the conformal factor $\lambda(z)$, thereby inflating the regions
\begin{equation}  \label{tfm} 
\varsigma^* = \arg \min_{\varsigma \in \mathbb{D}} \int_{\bigcup g_k(\varpi_k)} \left( \det(J_{M \circ \hbar}) - \bar{\lambda} \right)^2 dA(z), 
\end{equation}
where $\det(J_{M \circ \hbar})$ represents the area distortion factor, and $\bar{\lambda}$ is the mean conformal factor. We denote \(\ell  = M^*\circ \hbar \) with \({M^*} = M\left( {z,{\varsigma ^*}} \right)\). The preceding geometric rectifications inevitably introduce local angular distortions. To minimize angle distortions while maintaining the rectified circular boundaries, we solve the Laplace equation with Dirichlet boundary conditions determined by $\ell$. Using the Linear Beltrami Solver (LBS)~\cite{rectangular13}, the conformal mapping $\tau$ is computed by
\begin{equation} \label{thm}
\begin{cases}
\Delta \tau = 0, & \text{in } S, \\ \tau(p) = \ell(p), & \text{on}~\partial S.
\end{cases}
\end{equation}
This yields the diffeomorphism \(\tau\) that maps the poly-annulus \(S\) to the n-holed disk 
\(\Xi\). Then, we propose the Distributed Poly-Annulus Mapping Algorithm
(i.e., Algorithm~\ref{Algorithm3}), as illustrated in Fig.~\ref{Fig.2}. Finally, we prove that virtual center is computable in a distributed manner, and establish the topological conjugacy between the dynamical systems on $S$ and $\Xi$.  

\begin{proposition} \label{thm1}
The following claims hold for the conformal mapping $\tau: S \to \Xi$ constructed in Algorithm~\ref{Algorithm3}.
\begin{enumerate}
\item The virtual center $\mathfrak{c}^*$ is synchronized such that $\lim_{t \to D_{\mathcal{G}}} \hat{c}_j(t) = \mathfrak{c}^*, \forall j \in \mathcal{N}$. \label{thm1.1}
\item The stability of dynamics on $S$ is equivalent to that on $\Xi$, preserved by the scalar invariance $\dot{\mathcal{W}}(x) = \dot{\mathcal{V}}(\hat{\xi})$. \label{thm1.2}
\end{enumerate}
\end{proposition}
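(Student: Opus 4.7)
The plan is to prove the two claims separately, since they concern different aspects of Algorithm~\ref{Algorithm3}. Claim 1 is a distributed consensus statement on the rectification parameters, while Claim 2 is a chain-rule identity that transfers the Lyapunov derivative across the diffeomorphism $\tau$.

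For Claim 1, I would model $\{\hat{c}_j(t)\}_{j\in\mathcal{N}}$ as the state of a distributed agreement process running on the communication graph $\mathcal{G}$. The target value $\mathfrak{c}^*$ is determined by aggregate boundary data that is initially fragmented among agents (for instance, the canonical centers $c_k$ in~\eqref{tbm} and the M\"obius parameter $\varsigma^*$ in~\eqref{tfm} depend on samples scattered across the network), so no single agent can compute $\mathfrak{c}^*$ in isolation. I would specify the one-step update
\begin{equation*}
\hat{c}_j(t+1) = \Phi\bigl(\hat{c}_j(t),\{\hat{c}_\ell(t)\}_{\ell\in\mathcal{N}_j}\bigr)
\end{equation*}
as a flooding / max-consensus style operator that, at each round, merges the local estimate with those received from one-hop neighbors $\mathcal{N}_j$. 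By induction on the number of hops, after $t$ rounds agent $j$'s state encodes all information originating at nodes within graph distance $t$. Because $\mathcal{G}$ is connected with diameter $D_{\mathcal{G}}$, every pair of agents is reachable within $D_{\mathcal{G}}$ hops, so $\hat{c}_j(D_{\mathcal{G}}) = \mathfrak{c}^*$ simultaneously for all $j\in\mathcal{N}$, yielding the stated finite-time limit.

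For Claim 2, the key observation is that the output $\tau$ of Algorithm~\ref{Algorithm3} is a $C^1$ diffeomorphism $S\to\Xi$, so its Jacobian $J_\tau(x)$ is invertible everywhere on $S$. Define the pull-back $\mathcal{V}(\hat{\xi}) := \mathcal{W}(\tau^{-1}(\hat{\xi}))$ and differentiate along a trajectory $\hat{\xi}(t)=\tau(x(t))$. The chain rule gives $\dot{\hat{\xi}} = J_\tau(x)\dot{x}$ and $\nabla_{\hat{\xi}}\mathcal{V} = J_\tau^{-\top}\nabla_x\mathcal{W}$, hence
\begin{equation*}
\dot{\mathcal{V}}(\hat{\xi}) = (\nabla_{\hat{\xi}}\mathcal{V})^{\top}\dot{\hat{\xi}} = (\nabla_x\mathcal{W})^{\top} J_\tau^{-1} J_\tau \dot{x} = \dot{\mathcal{W}}(x).
\end{equation*}
Because $\tau$ is a homeomorphism with a continuous inverse on the compact set $S$, positive definiteness, sublevel-set compactness, and invariant-set conditions all transfer between $\mathcal{W}$ and $\mathcal{V}$. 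Combined with the scalar invariance of the Lyapunov derivative, this yields a one-to-one equivalence of asymptotic, Lyapunov and ISS stability properties across the two dynamics.

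The main obstacle is Claim 1: pinning down precisely which quantity each agent broadcasts and showing that the aggregation becomes stationary exactly at $t=D_{\mathcal{G}}$ requires committing to the algorithm's data-exchange protocol, and one must verify that the operator $\Phi$ is actually computable from neighborhood-only information and that its fixed point coincides with the centralized value $\mathfrak{c}^*$. A secondary subtlety in Claim 2 is that $S$ is a manifold rather than a Euclidean domain, so the chain rule should be interpreted intrinsically; however, since $\dot{\mathcal{W}}$ and $\dot{\mathcal{V}}$ are chart-independent scalar quantities, a local coordinate computation suffices.
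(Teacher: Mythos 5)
Your treatment of Claim 2 is essentially the paper's argument: a chain-rule computation showing $\dot{\mathcal{V}}(\hat{\xi}) = (\nabla_{\hat{\xi}}\mathcal{V})^{\top}J_\tau\dot{x} = \dot{\mathcal{W}}(x)$ using the invertibility of $J_\tau$ for the diffeomorphism $\tau$ on the compact manifold $S$ (you pull back $\mathcal{W}$ through $\tau^{-1}$ while the paper pulls back $\mathcal{V}$ through $\tau$, which is the same identity). Your added remarks that positive definiteness and sublevel-set compactness transfer under the homeomorphism, and that the derivative is a chart-independent scalar, go slightly beyond the paper's one-line argument and are fine.

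Claim 1 is where there is a genuine gap, and you flag it yourself without resolving it. The paper's key idea is that $\mathfrak{c}^*$ \emph{is} locally computable by a distinguished subset of agents: those $j$ with $\partial\tau(S_j)\cap\partial\Xi_{out}\neq\emptyset$ recognize that their mapped boundary arc has constant curvature $\kappa_{out}=1$, hence lies on the unit circle, and recover its center $\mathfrak{c}^*$ directly from their own arc data. These agents act as leaders, and the remaining (interior) agents run a leader-following consensus update $\hat{c}_j(t+1) = (\deg_j+1)^{-1}\bigl(\hat{c}_j(t)+\sum_{l\in\mathcal{N}_j}\hat{c}_l(t)\bigr)$ so that $\mathfrak{c}^*$ propagates across the connected graph within $D_{\mathcal{G}}$ steps. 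Your premise that ``no single agent can compute $\mathfrak{c}^*$ in isolation'' is the opposite of this mechanism, and it forces you into an unspecified flooding/aggregation operator $\Phi$ built from fragmented welding data ($c_k$, $\varsigma^*$); you then correctly observe that you cannot verify that such an operator is neighborhood-computable or that its fixed point equals the centralized $\mathfrak{c}^*$ --- which is precisely the missing step. Your hop-counting induction does establish that information reaches everyone within $D_{\mathcal{G}}$ rounds (and is, if anything, a cleaner route to the finite-time claim than plain averaging), but without the local-identifiability-by-boundary-curvature idea there is nothing concrete being propagated, so the proof of Claim 1 as written is incomplete.
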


\begin{proof}
{For Claim \ref{thm1.1}),} let $\partial \Xi_{out}$ be the unit circle boundary in $\Xi$. For agents in $\mathcal{N}_A = \{ i \in \mathcal{N} \mid \partial \tau(S_i) \cap \partial \Xi_{out} \neq \emptyset \}$, they identify $\mathfrak{c}^*$ by measuring the constant curvature $\kappa_{out} = 1$ of their boundary arcs. For agents in $\mathcal{N}_I = \mathcal{N} \setminus \mathcal{N}_A$, the leader-following consensus protocol $\hat{c}_j(t+1) = (\text{deg}_j + 1)^{-1} (\hat{c}_j(t) + \sum_{l \in \mathcal{N}_j} \hat{c}_l(t))$ ensures $\mathfrak{c}^*$ propagates across the connected graph $\mathcal{G}$ within finite steps $D_{\mathcal{G}}$, where $\text{deg}_j$ denotes the node degree. 
For Claim \ref{thm1.2}), Since $\tau$ is a diffeomorphism on the compact manifold $S$, the pull-back dynamics satisfy $\dot{x} = J_{\tau}^{-1} \dot{\hat{\xi}}$. The time derivative of the induced Lyapunov function $\mathcal{W}(x) = \mathcal{V}(\tau(x))$ is derived as $\dot{\mathcal{W}}(x) = (\nabla_{\hat{\xi}} \mathcal{V})^T J_{\tau} \cdot \dot{x} = (\nabla_{\hat{\xi}} \mathcal{V})^T \dot{\hat{\xi}} = \dot{\mathcal{V}}(\hat{\xi})$. As $\mathcal{V}(\hat{\xi})$ ensures asymptotic stability in $\Xi$ ($\dot{\mathcal{V}} < 0$), it directly implies $\dot{\mathcal{W}}(x) < 0$ in $S$.
\end{proof}

Proposition~\ref{thm1} provides the theoretical justification for control design in the ball world \(\Xi\). The scalar invariance of the Lyapunov function ($\mathcal{W}(x) \equiv \mathcal{V}(\xi)$) ensures that the gradient flows in both regions are topologically conjugate. Consequently, complex collision avoidance and coverage tasks in poly-annulus $S$ are rigorously reduced to simplified problems suitable for sectorial partition in the $n$-holed disk $\Xi$.

\begin{algorithm}[t] 
\caption{\label{Algorithm3}Distributed Poly-Annulus Mapping Algorithm} 
\renewcommand{\algorithmicrequire}{\textbf{Initialize:}}
\renewcommand{\algorithmicensure}{\textbf{Finalize:}}
{\bf Initialize:} Poly-annulus $S$, obstacle set $\mathcal{O}$, generator set $\mathcal{V}$.
\begin{algorithmic}[1]
\State Partition $S$ into subdomains $\{S_k\}_{k=1}^M$ via GVT
\For{$k=1:M$}
\If{$O_k \subset S_k$ (Type II)}
\State Set topological annulus $\varpi_k \leftarrow S_k$
\Else
\State Construct $\varpi_k = \vartheta_k$ via local optimal welding $\Theta^*$
\EndIf \State {\bf end~if}
\State Compute local map $f_k: \varpi_k \to \mathbb{A}_k$ via \eqref{annulus}
\EndFor \State {\bf end~for}
\State Minimize $ E_{g}$ to obtain $\{\alpha_k, \beta_k\}_{k=2}^n$ via \eqref{eg}
\State Construct global welding $\{g_k = T_k \circ f_k\}_{k=1}^n$
\State Update boundary $\ell = M^* \circ \hbar$ via Möbius parameter $\varsigma^*$
\State Solve $\Delta \tau = 0$ in $S$ with $\tau|_{\partial S} = \ell$ via \eqref{thm}
\end{algorithmic}
{\bf Finalize:} Global Conformal Mapping $\tau: S \to \Xi$.
\end{algorithm}


\section{Theoretical results}\label{section4}
Based on the designed conformal mapping, this section establishes the diffeomorphic coverage control framework. We first propose a multi-hole partition algorithm that modifies the partition boundaries to ensure topological safety in the ball world. Subsequently, a length metric is introduced to design a distributed control law. Theoretical analysis on the existence of solutions and the stability of partition dynamics and agent kinematics is conducted. 
 
\subsection{Partition Dynamics}\label{PD}
This section proposes a distributed partition strategy 
in $n$-holed disk \(\Xi\) (the ball world) based on~\eqref{partitioning dynamic}. To adapt the sectorial partition to n-holed disk \(\Xi\), the partition bars must be modified to ensure collision avoidance while maintaining the topological connectivity of subregions. To analyze the impact of workload errors, we adopt the perturbation analysis against non-vanishing perturbations~\cite{khalil}. We distinguish between two geometric configurations: the linear partition bar $\hat{B}_i(\hat{\psi}_i)$ and the piecewise smooth curve $\hat{\Gamma}_i(\hat{\psi}_i)$ generated to bypass obstacles. This distinction allows us to model the actual dynamics of the workload $\hat{m}_i$ as the nominal dynamics (governed by $\hat{B}_i$) superimposed with a geometric disturbance term $\hat{\delta}_i$ arising from obstacle avoidance (i.e., $\dot{\hat{m}}_{i, act} = \dot{\hat{m}}_{i, nom} + \hat{\delta}_i$).

We propose the Multi-Hole Partition Algorithm (i.e., Algorithm~\ref{Algorithm1}) to transform $\hat{B}_i$ into $\hat{\Gamma}_i$ in a distributed manner. For every obstacle $\hat{O}_k$ intersected by the nominal bar $\hat{B}_i$, we define a buffer circle $\hat{\mathcal{C}}_{k,i}$ with radius $\hat{r}_{k,i} = \hat{r}_k(1 + i\beta)$. The parameter $\beta$ serves as a topological sequence factor, ensuring that the partition bars of different agents ($i \neq j$) are spatially separated when bypassing the same obstacle, thereby strictly preventing topological entanglements. The segment of partition bar inside the buffer circle $\hat{\mathcal{C}}_{k,i}$ is replaced by a circular arc $\hat{A}_{k,i}$. (The computational details in Appendix~\ref{app2}). The final curve $\hat{\Gamma}_i$ is constructed by concatenating these arcs $\hat{\mathcal{A}}_i = \{ \hat{A}_{k,i} \}_{k \in \hat{\mathcal{K}}_i}$ with the remaining linear segments $\hat{D}_i = [0, R] \setminus \bigcup_{k \in \hat{\mathcal{K}}_i} (\hat{p}_{in}^k, \hat{p}_{out}^k)$, where $\hat{\mathcal{K}}_i$ is the set of obstacles indices intersected by $\hat{B}_i$. These geometric modifications isolate partition bars from obstacles while strictly preserving the multi-agent topological order.

\begin{proposition}\label{pro1} 
Algorithm~\ref{Algorithm1} guarantees strict collision avoidance and topological invariance within $\Xi$. For any agents $i, j \in \{1, \dots, N\}$ ($i \neq j$) and any obstacle $\hat{O}_k$, the partition bars $\Gamma$ satisfy: $\hat{\Gamma}_i \cap \hat{O}_k = \emptyset$ and $\hat{\Gamma}_i \cap \hat{\Gamma}_j = \emptyset$.
\end{proposition}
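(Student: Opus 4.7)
The plan is to leverage the piecewise decomposition $\hat{\Gamma}_i = \hat{D}_i \cup \bigcup_{k \in \hat{\mathcal{K}}_i} \hat{A}_{k,i}$ furnished by Algorithm~\ref{Algorithm1} and verify the two claims by case analysis on the component types (linear segment $\hat{D}_i$ versus circular arc $\hat{A}_{k,i}$). The central structural ingredient is the strictly monotone buffer radius $\hat{r}_{k,i} = \hat{r}_k(1+i\beta)$, which couples the safety margin directly to the agent index and thereby serializes the bypass geometry across agents.

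For the obstacle-avoidance claim $\hat{\Gamma}_i \cap \hat{O}_k = \emptyset$, I would split on whether $k$ lies in the bypass index set $\hat{\mathcal{K}}_i$. If $k \notin \hat{\mathcal{K}}_i$, then by definition $\hat{B}_i \cap \hat{O}_k = \emptyset$ and the construction leaves $\hat{\Gamma}_i$ equal to $\hat{B}_i$ near $\hat{O}_k$. If $k \in \hat{\mathcal{K}}_i$, the replacement arc $\hat{A}_{k,i}$ sits on the buffer circle of radius $\hat{r}_k(1+i\beta)>\hat{r}_k$ centered at $\hat{o}_k$, so every point of the arc is at Euclidean distance $i\beta\hat{r}_k>0$ from $\partial \hat{O}_k$; the residual linear parts $\hat{D}_i$, being defined as the complement of the punctured intervals $(\hat{p}_{in}^k,\hat{p}_{out}^k)$, lie strictly outside each buffer disk and a fortiori outside the obstacle.

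For the pairwise separation claim $\hat{\Gamma}_i \cap \hat{\Gamma}_j = \emptyset$ with $i \neq j$, I would perform a four-fold case analysis. In the linear-linear case, $\hat{D}_i \subset \hat{B}_i$ and $\hat{D}_j \subset \hat{B}_j$ lie on rays emanating from the origin at the strictly distinct angles $\hat{\psi}_i \neq \hat{\psi}_j$ and share at most the origin, which is excluded from both $\hat{D}_i$ and $\hat{D}_j$. In the arc-arc case over a common obstacle $\hat{O}_k$, the two arcs lie on concentric circles of distinct radii $\hat{r}_k(1+i\beta)$ and $\hat{r}_k(1+j\beta)$ and are therefore disjoint for any $\beta>0$; for distinct obstacles, disjointness of $\hat{\mathcal{C}}_{k,i}$ and $\hat{\mathcal{C}}_{l,j}$ follows from a standing upper bound on $\beta$ tied to the minimum inter-obstacle distance, which I would state explicitly as a hypothesis on the topological sequence factor.

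The main obstacle will be the remaining arc-linear case, i.e., showing that $\hat{D}_j$ does not puncture any $\hat{A}_{k,i}$. The key observation is that if ray $\hat{\psi}_j$ enters the buffer disk bounded by $\hat{\mathcal{C}}_{k,i}$ without intersecting $\hat{O}_k$, then $k \notin \hat{\mathcal{K}}_j$, so agent $j$ performs no bypass at $\hat{O}_k$ and the crossing occurs in the annular shell strictly outside the obstacle. Since $\hat{A}_{k,i}$ occupies only the portion of this shell subtended by the entry and exit angles of ray $\hat{\psi}_i$ relative to $\hat{o}_k$, separation reduces to a planar chord-versus-arc argument showing that the crossing points of ray $\hat{\psi}_j$ with $\hat{\mathcal{C}}_{k,i}$ lie in the open complementary arc. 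This quantitative estimate, controlled by the angular separation of $\hat{\psi}_i$ and $\hat{\psi}_j$ together with the standing bound on $\beta$, is the principal technical hurdle, and I expect the serialized inflation factor $(1+i\beta)$ to play the decisive role by ensuring that the arc endpoints vary monotonically in $i$ and therefore never meet the ray of a different agent.
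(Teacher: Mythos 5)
Your treatment of the obstacle-avoidance claim and of two arcs around the same obstacle matches the paper: the inflated radius $\hat{r}_{k,i}=\hat{r}_k(1+i\beta)>\hat{r}_k$ gives clearance from $\partial\hat{O}_k$, and $|\hat{r}_{k,i}-\hat{r}_{k,j}|=\hat{r}_k\beta|i-j|\ge\hat{r}_k\beta>0$ separates the arcs radially. However, there are two genuine gaps in the remainder. First, your linear--linear case silently assumes $\hat{\psi}_i\neq\hat{\psi}_j$, but the bars are not static objects: the phases evolve under the partition dynamics \eqref{partitioning dynamic}, and the whole point of the paper's argument is to show that the ordered set $\mathcal{D}_{order}=\{\hat{\psi}_1<\dots<\hat{\psi}_N\}$ is positively invariant. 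The paper does this by computing the gap dynamics $d\Delta\hat{\psi}_i/dt=k_{\hat{\psi}}(\hat m_{i+1}-2\hat m_i+\hat m_{i-1})$ and observing that as $\Delta\hat{\psi}_i\to 0^+$ the workload $\hat m_i$ vanishes while the neighboring workloads do not, so $\lim_{\Delta\hat{\psi}_i\to 0^+}\Delta\dot{\hat{\psi}}_i=k_{\hat{\psi}}(\hat m_{i+1}+\hat m_{i-1})>0$; this repulsion at vanishing gaps is what guarantees the phases never cross for all $t\ge 0$. Without this dynamic invariance argument, your case analysis only certifies disjointness at an instant where distinctness is assumed, not along the closed-loop evolution, which is how the proposition is used later (the paper explicitly concludes ``for all $t\ge 0$'').

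Second, the arc--linear case that you yourself identify as the principal hurdle is left unproven: the chord-versus-arc estimate is only sketched, and as stated it would require additional standing hypotheses (an upper bound on $\beta$ tied to the angular separation of the phases and to the minimum inter-obstacle distance) that appear nowhere in Algorithm~\ref{Algorithm1} or in the proposition. The same remark applies to your arc--arc case for distinct obstacles, where you import an unstated bound on $\beta$. To be fair, the paper's own proof does not dissect these mixed cases either --- it combines the ordering invariance of the linear segments with the radial separation of same-obstacle arcs and concludes directly --- but since your proposal explicitly hinges on completing the quantitative chord-versus-arc argument and does not, the proof as submitted is incomplete. The most economical repair is to adopt the paper's route: establish the positive invariance of $\mathcal{D}_{order}$ from the workload dynamics first, and then argue separation near each obstacle through the index-serialized radii, rather than attempting a full four-fold planar case analysis with extra smallness assumptions on $\beta$.
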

\begin{proof}
Algorithm~\ref{Algorithm1} constructs the partition boundary $\hat{\Gamma}_i$ near obstacle $\hat{O}_k$ as a circular arc with radius $\hat{r}_{k,i} = \hat{r}_k(1 + i\beta)$. Since $i \ge 1$ and $\beta > 0$, the safety radius satisfies $\hat{r}_{k,i} > \hat{r}_k$. Thus, the bar is strictly separated from the obstacle boundary $\partial \hat{O}_k$ (where radius is $\hat{r}_k$), ensuring $\hat{\Gamma}_i \cap \hat{O}_k = \emptyset$. To prove $\hat{\Gamma}_i \cap \hat{\Gamma}_j = \emptyset$, we first demonstrate that the ordering of agent phases $\hat{\psi}_i$ is invariant under the partition dynamics \eqref{partitioning dynamic}. Let the set of sequentially ordered phases be $\mathcal{D}_{order} = \{ \hat{\boldsymbol{\psi}} \in [0, 2\pi)^N \mid \hat{\psi}_1 < \hat{\psi}_2 < \dots < \hat{\psi}_N \}$. Consider the angular gap between adjacent agents: $\Delta \hat{\psi}_i(t) = \hat{\psi}_{i+1}(t) - \hat{\psi}_i(t)$ (with $\hat{\psi}_{N+1} = \hat{\psi}_1 + 2\pi$). The evolution of the angular gap $\Delta \hat{\psi}_i$ follows the discrete Laplacian flow \(d\Delta {\hat\psi _i}/dt = {\dot {\hat\psi} _{i + 1}} - {\dot {\hat\psi} _i} = {k_{\hat \psi }}({\hat m_{i + 1}} - 2{\hat m_i} + {\hat m_{i - 1}})\). By the continuity of the density $\hat{\rho}$ on the compact manifold $S$, the workload $\hat{m}_i$ vanishes as the sectorial area $\hat{E}_i \to 0$ (i.e., $\Delta \hat{\psi}_i \to 0^+$). Given the total workload \(\sum\nolimits_{i = 1}^N {{m_i}} > 0\), a potential singularity where $\hat{m}_i = 0$ implies that its adjacent neighbors must retain non-vanishing workloads, (i.e., $\hat{m}_{i-1} + \hat{m}_{i+1} > 0$). Consequently, the boundary velocity of the gap satisfies
\begin{equation} 
\lim_{\Delta \hat{\psi}_i \to 0^+} \Delta \dot{\hat{\psi}}_i = k_{\hat{\psi}} (\hat{m}_{i+1} + \hat{m}_{i-1}) > 0. 
\end{equation} 
This strictly positive derivative ensures that $\mathcal{D}_{order}$ is a positively invariant set \cite{khalil}. The vanishing subregion induces a repulsive effect that prevents phase crossing, ensuring the bars never intersect along their linear segments. Then, consider two distinct agents $i$ and $j$ bypassing a same obstacle $\hat{O}_k$. Their spatial separation is determined by the difference in safety radii: $|\hat{r}_{k,i} - \hat{r}_{k,j}| = \hat{r}_k \beta |i - j|$. Since $i \neq j$ implies $|i-j| \ge 1$, the radial clearance is lower-bounded by $\hat{r}_k \beta > 0$. This ensures that even if two partition bars share the same angular interval during bypass, they remain disjoint in the radial dimension. Combining these two properties, we conclude $\hat{\Gamma}_i \cap \hat{\Gamma}_j = \emptyset$ for all $t \ge 0$, preserving the multi-agent topological order.
\end{proof}

With the strict topological safety and ordering invariance established, the modified partition bar $\hat{\Gamma}_i$ is proven to be geometrically valid within $\Xi$. However, the introduction of buffer arcs inevitably causes a deviation between $\hat{\Gamma}_i$ and $\hat{B}_i$. The following lemma quantifies the workload error $\hat{\delta}_i$ induced by this geometric regularization. 
\begin{lemma} \label{lemma 3.1}
The workload error $\Delta m_i$ for the $i$-th agent in Algorithm~\ref{Algorithm1} is strictly bounded and controllable via the parameter $\beta$. 
\end{lemma}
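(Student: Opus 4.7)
The plan is to bound $\Delta m_i := |\hat{m}_{i,\mathrm{act}} - \hat{m}_{i,\mathrm{nom}}|$ by a purely geometric quantity that shrinks with $\beta$. First I would identify the region on which the two workloads differ. Since $\hat m_{i,\mathrm{nom}}$ integrates $\hat\rho$ over the sector $\hat E_i^B$ bounded by the linear bars $\hat B_i,\hat B_{i+1}$ while $\hat m_{i,\mathrm{act}}$ integrates over the sector $\hat E_i^\Gamma$ bounded by the buffered curves $\hat\Gamma_i,\hat\Gamma_{i+1}$, their symmetric difference $\hat E_i^B\triangle\hat E_i^\Gamma$ is supported in the union of the buffer crescents lying between $\hat B_i$ (resp.\ $\hat B_{i+1}$) and the matching arc $\hat A_{k,i}$ (resp.\ $\hat A_{k,i+1}$). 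Each such crescent is contained in the annular region $\{\hat r_k \le \|\hat p - \hat o_k\| \le \hat r_{k,i}\}$ with $\hat r_{k,i}=\hat r_k(1+i\beta)$, so
\begin{equation*}
\mathrm{Area}(\hat E_i^B\triangle\hat E_i^\Gamma)\;\le\;\sum_{k\in\hat{\mathcal K}_i\cup\hat{\mathcal K}_{i+1}}\pi\hat r_k^{\,2}\bigl((1+i\beta)^2-1\bigr)\;=\;\pi\sum_k \hat r_k^{\,2}\bigl(2i\beta+i^2\beta^2\bigr).
\end{equation*}

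Next I would transfer this geometric bound into a density‐weighted bound. Because $\hat\rho$ is continuous on the compact set $\Xi$, it attains a finite maximum $\hat\rho_{\max}<\infty$, so
\begin{equation*}
\Delta m_i\;=\;\Bigl|\int_{\hat E_i^\Gamma}\hat\rho\, dA-\int_{\hat E_i^B}\hat\rho\, dA\Bigr|\;\le\;\hat\rho_{\max}\cdot\mathrm{Area}(\hat E_i^B\triangle\hat E_i^\Gamma)\;\le\;\delta(\beta),
\end{equation*}
where
\begin{equation*}
\delta(\beta)\;:=\;\pi\hat\rho_{\max}\sum_{k\in\hat{\mathcal K}_i\cup\hat{\mathcal K}_{i+1}}\hat r_k^{\,2}\bigl(2i\beta+i^2\beta^2\bigr).
\end{equation*}
Since $N,i$ and the obstacle radii $\hat r_k$ are fixed a priori, $\delta(\beta)$ is a polynomial of degree two in $\beta$ with $\delta(0)=0$ and $\delta(\beta)\downarrow 0$ monotonically as $\beta\to 0^+$. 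This shows both strict boundedness and controllability through $\beta$, establishing the claim.

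The main obstacle I expect is the justification that the symmetric difference is indeed \emph{contained} in the buffer annuli, since the modified curves $\hat\Gamma_i$ can in principle wrap around an obstacle on either side, and two adjacent agents' arcs $\hat A_{k,i}$ and $\hat A_{k,i+1}$ may overlap angularly. I would handle this by invoking the construction in Appendix~\ref{app2}: $\hat\Gamma_i$ coincides with $\hat B_i$ outside $\bigcup_k\hat{\mathcal C}_{k,i}$ and lies inside $\hat{\mathcal C}_{k,i}\setminus\hat O_k$ elsewhere. Hence any point in $\hat E_i^B\triangle\hat E_i^\Gamma$ must lie within some buffer disk associated with either $\hat\Gamma_i$ or $\hat\Gamma_{i+1}$, which is exactly the containment used above. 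A secondary subtlety is double counting when two buffer disks intersect; the inequality still holds because the union's area is at most the sum of individual areas, and the monotone decay in $\beta$ is unaffected. Optionally, if a tighter bound is desired, one can replace the disk area by only the crescent area between the chord $\hat B_i\cap\hat{\mathcal C}_{k,i}$ and the arc $\hat A_{k,i}$, yielding the same $O(\beta)$ scaling with a smaller constant.
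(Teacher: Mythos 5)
Your proposal is correct and takes essentially the same approach as the paper: localize the workload discrepancy to the buffer regions between radius $\hat{r}_k$ and $\hat{r}_{k,i}=\hat{r}_k(1+i\beta)$, bound $\hat{\rho}$ by its supremum on the compact domain, and conclude an error that scales linearly (plus a quadratic term) in $\beta$. The only differences are minor: the paper integrates over the actual angular span of each crossing (a crescent in polar coordinates about $\hat{o}_k$) and drops the $\beta^2$ term as negligible, whereas you upper-bound by the full annulus area and retain the quadratic term, which gives a slightly larger constant but is a bit more careful about the remainder and about contributions from both bounding bars.
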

\begin{proof}
In Algorithm 2, when a partition bar $\hat{\Gamma}_i$ intersects with the obstacle $\hat{O}_k$, it is rerouted along the buffer circle with radius $\hat{r}_{k,i} = \hat{r}_k(1 + i\beta)$. The workload error contributed by $\hat{O}_k$ is given by 
$$
\Delta \hat{m}_{k,i} = \int_{\hat{\theta}_{s, k}}^{\hat{\theta}_{e, k}} \int_{\hat{r}_k}^{\hat{r}_{k,i}} \hat{\rho}(\hat{r}, \hat{\theta}) \hat{r} d\hat{r} d\hat{\theta},
$$ 
where $\Delta \hat{\theta}_k = \hat{\theta}_{e, k} - \hat{\theta}_{s, k}$ is the angular span. Since the density $\hat{\rho}$ is continuous on the compact set $S$, it is bounded by $\|\hat{\rho}\|_{\infty}$. Evaluating the inner radial integral yields $\int_{\hat{r}_k}^{\hat{r}_{k,i}} \hat{r} d\hat{r} = \hat{r}_k^2 i\beta + \frac{1}{2} \hat{r}_k^2 i^2 \beta^2$. For a small sequence factor $\beta \ll 1$, the second-order term $\beta^2$ is negligible. Summing over all obstacles $k \in \hat{\mathcal{K}}_i$ intersected by $\hat{\Gamma}_i$, and considering the worst-case agent index $i=N$, the total workload error satisfies
\begin{equation}
\Delta \hat{m}_i = \sum_{k \in \hat{\mathcal{K}}i} \Delta \hat{m}_{k,i} \le \beta \cdot N \cdot C_g \cdot \|\hat{\rho}\|_{\infty}, 
\end{equation}
where $C_g = \sum_{k=1}^n L_k \hat{r}_k$ is a positive constant and $L_k = \hat{r}_k \Delta \hat{\theta}_k$. By defining $C_{\hat{\delta}} = N C_g |\|\hat{\rho}\|_{\infty}$, we obtain $\Delta \hat{m}_i \le C_{\hat{\delta}} \beta$. Thus, the workload deviation $\Delta \hat{m}_i$ is controllable via $\beta$ and vanishes as $\beta \to 0$.  
\end{proof}

While the partition dynamics can ensure the topological safety of the virtual bars $\hat{\Gamma}_i$, a robust motion planning strategy is required for agents to navigate in their corresponding sub-regions $\hat{E}_i$. Note that Euclidean control laws may fail to prevent collisions with the intricate boundaries of the poly-annulus surface $S$. To address this, the following subsection constructs a safety-aware Riemannian metric 
to govern the agent kinematics.

\begin{algorithm}[t!]
    \caption{\label{Algorithm1} Multi-Hole Partition Algorithm}
    \renewcommand{\algorithmicrequire}{\textbf{Initialize:}}
    \renewcommand{\algorithmicensure}{\textbf{Finalize:}}
{\bf Initialize:} Agent number $N$, obstacles $\hat{\mathcal{O}}$, sequence factor $\beta$, radius of $n$-holed disk $R$. 
    \begin{algorithmic}[1]
    \For {$i=1:N$} 
    \State Update partition bar $\hat{B}_i(\hat{\psi}_i)$ with $\hat{\psi}_i$ via \eqref{partitioning dynamic}
    \State Identify $\hat{\mathcal{K}}_i = \{k\mid \hat{B}_i \cap \hat{O}_k \neq \emptyset\}$ 
    \For {each $k \in \hat{\mathcal{K}}_i$} 
    \State Set $\hat{\mathcal{C}}_{k,i}$ with $\hat{r}_{k,i} = \hat{r}_k (1 + i\beta)$
    \State Construct arc $\hat{A}_{k,i} \subset \hat{\mathcal{C}}_{k,i}$ s.t. $\partial \hat{A}_{k,i} = \hat{B}_i \cap \hat{\mathcal{C}}_{k,i}$.
    \EndFor \State {\bf end for}
    \State $\hat{\Gamma}_i = ([0, R] \setminus \bigcup_{k \in \hat{\mathcal{K}}_i} (\hat{p}_{in}^k, \hat{p}_{out}^k)) \bigoplus_{k \in \hat{\mathcal{K}}_i} \hat{A}_{k,i}$.
    \EndFor \State {\bf end for}
    \end{algorithmic}
    {\bf Finalize:} Modified partition bars $\hat{\Gamma} = \{ \hat{\Gamma}_1, \dots, \hat{\Gamma}_N \}$. 
\end{algorithm}

\subsection{Design of Distributed Controller}
This section establishes a geometric coordination mechanism for agent navigation on the poly-annulus $S$. First, a Riemannian length metric is constructed to encode obstacles avoidance. Second, a Remannian gradient-based control law is synthesized as a flow on the resulting manifold to ensure autonomous navigation.
\subsubsection{Design of Length Metric} \label{Design of Length metric}
In the mapping space $\Xi$, let $\hat{p} = (\hat{r}, \hat{\theta})$ be the coordinate in a local chart. The distance from $\hat{p}$ to the boundary of the $k$-th obstacle $\hat{O}_k$ (centered at $\hat{o}_k$) is $\hat{s}_k(\hat{p}) = \|\hat{p} - \hat{o}_k\| - \hat{r}_k$. To ensure the validity and smoothness of the navigation space, we construct a static weight function $\hat\sigma_0(\hat{p}) = \sum_{k=1}^n \left( e^{\mu \hat{s}_k(\hat{p})} - 1 \right)^{-1}$, where $\mu > 0$ is a curvature parameter. As $\hat{s}_k \to 0^+$, the weight $\hat\sigma_0 \to \infty$, which characterizes the infinite navigation cost at the boundaries. The Riemannian metric $\hat{\eta}_0$ on $\Xi$ is defined as
\begin{equation} \label{Metric_Xi}
\hat{\eta}_0(\hat{p}) = (1 + \hat\sigma_0(\hat{p}))^2 \text{diag}(1, \hat{r}^2).
\end{equation}
This metric is pulled back to the poly-annulus $S$ as $\eta_0(p) = \lambda^2(p) \hat{\eta}_0(\tau(p))$, where $\lambda(p) = \sqrt{|\det(J_\tau(p))|}$ is the conformal factor induced by the Jacobian $J_\tau = \partial \tau / \partial p$. The associated length metric $d_l(p, q)$ is defined as the infimum of path lengths:
\begin{equation} \label{dl} 
d_l(p, q) = \inf_{\gamma} \left\{ \int_0^1 \sqrt{ \dot{\gamma}(t)^T \eta_0(\gamma(t)) \dot{\gamma}(t) }  dt \right\}, 
\end{equation}
where $\gamma$ is a piecewise smooth curve such that $\gamma(0)=p$ and $\gamma(1)=q$. Notably, $d_l$ is defined solely based on static obstacles to ensure a time-invariant geometric workspace for the cost function~\eqref{cost}.

\subsubsection{Design of Control Law}
While the length metric $d_l$ governs global navigation, inter-agent collision avoidance is integrated via a dynamic metric. Let $\Omega_{ij} = \{p_i \in S \mid \|p_i - p_j\| \leq 2r_a\}$ denote the inter-agent collision set with agent radius $r_a$. Then define the feasible set $\mathcal{F}_i = S \setminus ( \bigcup_{k=1}^n O_k \cup \bigcup_{j \in \mathcal{N}_i} \Omega_{ij} )$. For the $i$-th agent, the composite weight is $\hat\sigma_i = \hat\sigma_0 + \sum_{j \in \mathcal{N}_i} \hat\sigma_{ij}$, where $\hat\sigma_{ij} = (e^{\mu (\|\hat{p}_i - \hat{p}_j\| - 2r_a)} - 1)^{-1}$ accounts for the proximity to neighbor $j$. Let $\hat{\eta}_i = (1+\hat\sigma_i)^2 \text{diag}(1, \hat{r}^2)$ be the local Riemannian metric in $\Xi$. To ensure the well-posedness of the induced dynamic manifold, we impose the following regularity condition.
\begin{assumption}[Safety Clearance] \label{Ass:Safety}
There exists a constant $\delta > 0$ such that the neighbor trajectories satisfy separation: $\hat{d}_{ij}\ge 2r_a + \delta$, $\forall t\ge 0$, $\forall j \in\mathcal{N}_i$, where ${{\hat d}_{ij}} = {d_{\hat l}}\left( {{{\hat p}_i},{{\hat p}_j}} \right)$. This condition restricts the system configuration to a compact subset of the feasible set $\mathcal{F}_i$, preventing metric singularities.
\end{assumption}
Based on this safety clearance assumption, the evolution of the Riemannian metric is mathematically bounded. Then, to ensure the effectiveness of the control input design, we need to analyze the relationship between the control input and the variation rate of the Riemannian metric.
\begin{lemma} \label{LemmaMR}
Under Assumption \ref{Ass:Safety}, the time derivative of the pull-back metric $\eta_i(t)$ satisfies $\|\dot{\eta}_i\|_F \le L_{\eta} \sum_{j \in \mathcal{N}_i} \|\dot{\hat{p}}_j\|$, where $L_{\eta} > 0$. Consequently, it asymptotically recovers a static manifold.
\end{lemma}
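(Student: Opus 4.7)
The plan is to compute $\partial_t\eta_i$ at a fixed spatial point and then bound the result factor by factor using the safety clearance. Writing $\eta_i(p,t)=\lambda^2(p)(1+\hat\sigma_i(\tau(p),t))^2\,\mathrm{diag}(1,\hat r^2)$, only the neighbor contribution $\hat\sigma_i-\hat\sigma_0=\sum_{j\in\mathcal{N}_i}\hat\sigma_{ij}(\cdot,t)$ carries time dependence (through $\hat p_j(t)$), so
\[
\dot\eta_i=2\lambda^2(p)(1+\hat\sigma_i)\Bigl(\sum_{j\in\mathcal{N}_i}\dot{\hat\sigma}_{ij}\Bigr)\mathrm{diag}(1,\hat r^2).
\]
By the triangle inequality in the Frobenius norm, it therefore suffices to bound each $|\dot{\hat\sigma}_{ij}|$ linearly in $\|\dot{\hat p}_j\|$ and to control the remaining multiplicative factors.

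Next, I would apply the chain rule to $\hat\sigma_{ij}=(e^{\mu x_{ij}}-1)^{-1}$ with $x_{ij}:=\|\hat p-\hat p_j\|-2r_a$, obtaining
\[
\dot{\hat\sigma}_{ij}=\frac{\mu e^{\mu x_{ij}}}{(e^{\mu x_{ij}}-1)^{2}}\,\frac{(\hat p-\hat p_j)^{T}}{\|\hat p-\hat p_j\|}\,\dot{\hat p}_j,
\]
whose magnitude is at most $g(x_{ij})\|\dot{\hat p}_j\|$ with $g(x):=\mu e^{\mu x}/(e^{\mu x}-1)^{2}$ by Cauchy--Schwarz. Assumption \ref{Ass:Safety} guarantees $x_{ij}\ge\delta'>0$ for some $\delta'$ determined by $\delta$ (after equating the length-metric clearance with a Euclidean one, using that $d_{\hat l}$ and $\|\cdot\|$ are bi-Lipschitz on the compact feasible subset of $\Xi$). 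On $[\delta',\infty)$ the kernel $g$ is continuous and monotonically decays to $0$, hence bounded by $C_g:=g(\delta')$. The same clearance yields a uniform bound $(1+\hat\sigma_i)\le C_\sigma$, since $\hat\sigma_0$ is bounded on the compact set away from $\partial\Xi$ and each of the finitely many neighbor terms is at most $1/(e^{\mu\delta'}-1)$.

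Finally, combining these with $\|\mathrm{diag}(1,\hat r^2)\|_F\le\sqrt{2}$ on the unit disk and $\lambda^2\le\Lambda^2$ by continuity on the compact $S$, the estimates telescope into
\[
\|\dot\eta_i\|_F\le \underbrace{2\sqrt{2}\,\Lambda^2 C_\sigma C_g}_{=:L_\eta}\sum_{j\in\mathcal{N}_i}\|\dot{\hat p}_j\|,
\]
proving the Lipschitz-type inequality. The asymptotic recovery claim then falls out for free: once the closed-loop analysis carried out in the subsequent stability result drives $\dot{\hat p}_j\to 0$ along trajectories, the right-hand side vanishes, $\dot\eta_i\to 0$, and $\eta_i(t)$ converges to a time-invariant tensor field, i.e.\ a static Riemannian manifold.

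The principal obstacle is taming the singular kernel $g(x)$, which behaves like $1/(\mu x^2)$ near the origin; absent Assumption \ref{Ass:Safety} no Lipschitz-type bound exists, and the metric would not even be $C^0$ in time. The key technical step is therefore to translate the length-metric clearance $\hat d_{ij}\ge 2r_a+\delta$ into a uniform positive lower bound on the Euclidean argument $x_{ij}$; this uses the bi-Lipschitz equivalence between $d_{\hat l}$ and the ambient Euclidean distance on the compact working set, which is itself a consequence of the smoothness of $\hat\eta_i$ away from its singular locus. All remaining estimates (bounding $(1+\hat\sigma_i)$, $\lambda$, and the diagonal matrix norm) are then routine and make no further use of the agent trajectories, which is exactly what the lemma requires.
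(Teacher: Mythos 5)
Your proof follows essentially the same route as the paper's: differentiate $\eta_i=\lambda^2(1+\hat\sigma_i)^2\,\mathrm{diag}(1,\hat r^2)$ in time, bound the exponential kernel $\partial\hat\sigma_{ij}/\partial\hat d_{ij}$ uniformly via the clearance in Assumption~\ref{Ass:Safety}, bound the remaining factors by compactness, and conclude $\dot{\hat p}_j\to 0\implies\dot\eta_i\to 0$. Your extra step converting the length-metric clearance into a Euclidean lower bound on $\|\hat p_i-\hat p_j\|-2r_a$ is a refinement the paper silently skips (it plugs $\hat d_{ij}$ directly into $\hat\sigma_{ij}$); just note that bi-Lipschitz equivalence alone only gives $\|\hat p_i-\hat p_j\|\ge(2r_a+\delta)/C_1$, which need not exceed $2r_a$, so the positive margin really comes from the assumption's statement that the configuration stays in a compact subset of $\mathcal{F}_i$.
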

\begin{proof}
Applying the chain rule to $\eta_i = \lambda^2 (1+\hat\sigma_i)^2 \text{diag}(1, \hat{r}^2)$ leads to $\dot{\eta}_i = 2\lambda^2(1+\hat{\sigma}_i)\text{diag}(1, \hat{r}^2) \sum_{j \in \mathcal{N}i} (\partial \hat{\sigma}_{ij}/\partial \hat{d}_{ij}) \cdot (d\|\hat{p}_i - \hat{p}_j\|/dt)$. On the compact set $\mathcal{F}_i$, the terms $2\lambda^2(1+\hat\sigma_i)$ and $\text{diag}(1, \hat{r}^2)$ are bounded. By Assumption \ref{Ass:Safety}, the condition $\hat{d}{ij} \ge 2r_a + \delta$ ensures a uniform upper bound
\begin{equation*}
\begin{split}
\|\partial\hat\sigma_{ij}/\partial \hat{d}_{ij}\| &= \|-\mu e^{\mu(\hat{d}_{ij} - 2r_a)} (e^{\mu(\hat{d}_{ij} - 2r_a)} - 1)^{-2}\| \\
&\le \mu e^{\mu \delta} (e^{\mu \delta} - 1)^{-2} < \infty.
\end{split}    
\end{equation*}
Since $|d\|\hat{p}_i - \hat{p}_j\|/dt| \le \|\dot{\hat{p}}_i\| + \|\dot{\hat{p}}_j\|$,     
there exists a constant $L_{\eta}$ such that $\|\dot{\eta}_i\|_F \le L_{\eta} \sum_{j \in \mathcal{N}_i} \|\dot{\hat{p}}_j\|$, which implies $\lim_{t \to \infty} \dot{\hat{p}}(t) = 0 \implies \lim_{t \to \infty} \dot{\eta}_i(t) = 0$.
\end{proof}

In light of Lemma \ref{LemmaMR}, the control input $u_i$ is designed as the Riemannian gradient flow on the instantaneous manifold
\begin{equation} \label{u}
\mathbf{u}_i = - k_p \nabla_{\eta, p_i} J = - k_p \eta_i^{-1}(p_i) \frac{\partial J}{\partial p_i},
\end{equation}
where $k_p > 0$ is the gain and Riemannian metric $\eta_i$ is the pull-back of $\hat{\eta}_i = (1+\hat\sigma_i)^2 \text{diag}(1, \hat{r}^2)$. This formulation defines $u_i$ as a geometric invariant, where the transformation of $\eta_i$ naturally compensates for coordinate changes to maintain the consistency across local charts \cite{Unknown13}.

\begin{proposition} \label{Prop1}
The following properties hold for the control input~\eqref{u} on the feasible set $\mathcal{F}_i$.
\begin{enumerate}
\item {Non-degeneracy:} $\eta_i \in \mathbb{S}_{++}^2$, $\|\eta_i\| < \infty, \forall p_i \in \mathcal{F}_i$. \label{Prop1.1}
\item {Equivalence:} $\nabla_{\eta, p_i} J = 0 \iff \frac{\partial J}{\partial p_i} = 0$. \label{Prop1.2}
\item {Invariance:} $p_i(0) \in \mathcal{F}_i \implies p_i(t) \in \mathcal{F}_i, \forall t \geq 0$. \label{Prop1.3}
\end{enumerate}
\end{proposition}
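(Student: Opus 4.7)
The plan is to verify the three properties in order, since they build on one another: the algebraic structure of the pull-back metric yields Claim~\ref{Prop1.1} by inspection, Claim~\ref{Prop1.2} is then an immediate consequence of invertibility, and Claim~\ref{Prop1.3} combines a Lyapunov argument on $J$ with the barrier property encoded in $\eta_i$. Throughout, I would work in the ball world via the diffeomorphism $\tau$ and pull conclusions back to $S$ through Proposition~\ref{thm1}.

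For Claim~\ref{Prop1.1}, I would inspect the factored form $\eta_i(p_i) = \lambda^2(p_i)(1+\hat\sigma_i(\hat p_i))^2 \text{diag}(1, \hat r^2)$. Since $\tau$ is a diffeomorphism, $\lambda(p_i) > 0$; the weight satisfies $(1+\hat\sigma_i)^2 \ge 1 > 0$; and on $\mathcal{F}_i$ the polar coordinate $\hat r > 0$ because the feasible set excludes the pole and is disjoint from the obstacle interiors. Hence $\eta_i$ is a positive scalar multiple of a positive definite diagonal matrix, so $\eta_i \in \mathbb{S}_{++}^2$. For $\|\eta_i\| < \infty$, I note that at any $p_i \in \mathcal{F}_i$ the distances $\hat s_k$ and $\hat d_{ij}-2r_a$ are strictly positive, so every summand $(e^{\mu x}-1)^{-1}$ in $\hat\sigma_i$ is finite, and $\lambda$ is continuous on the compact manifold $S$. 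Claim~\ref{Prop1.2} then follows at once: a symmetric positive definite matrix is invertible, so $\eta_i^{-1}(\partial J/\partial p_i) = 0$ if and only if $\partial J/\partial p_i = 0$.

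For Claim~\ref{Prop1.3}, I would proceed in two stages. Stage one handles static obstacles. Differentiating $J$ along the closed-loop flow yields $\dot J = -k_p \sum_i (\partial J/\partial p_i)^T \eta_i^{-1} (\partial J/\partial p_i) \le 0$, so $J(t) \le J(0) < \infty$. Because $\eta_0$ carries the factor $(1+\hat\sigma_0)^2$ with $\hat\sigma_0 \to \infty$ as $\hat s_k \to 0^+$, the definition~\eqref{dl} forces $d_l(p_i, q) \to \infty$ for any $q \in E_i$ bounded away from obstacles as $p_i \to \partial O_k$; Assumption~\ref{assf} then makes $f_i(d_l)$ and hence $J$ diverge, contradicting boundedness. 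Stage two is the inter-agent case and forms the main obstacle, because $d_l$ and therefore $J$ include no inter-agent terms. Here I would exploit the barrier property built into $\eta_i$ itself: as $\hat d_{ij} \to 2r_a$, the weight $\hat\sigma_{ij} = (e^{\mu(\hat d_{ij}-2r_a)}-1)^{-1}$ diverges, so $\eta_i^{-1}$ collapses in the direction connecting agents $i$ and $j$, and the projection of $u_i = -k_p \eta_i^{-1}(\partial J/\partial p_i)$ onto that direction vanishes. I would formalize this by introducing the barrier candidate $V_{ij} = -\ln(\hat d_{ij}-2r_a)$, computing $\dot V_{ij}$ through~\eqref{u} and Lemma~\ref{LemmaMR}, and showing that the directional blow-up of $\eta_i^{-1}$ dominates any growth of the Euclidean gradient. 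The delicate technical step is the quantitative coupling between this directional collapse and the projection of $\partial J/\partial p_i$ onto $\hat p_i - \hat p_j$; once that estimate is in hand, $\dot V_{ij}$ stays finite along trajectories, yielding a uniform-in-time bound $\hat d_{ij}(t) \ge 2r_a + \varepsilon$ that, together with stage one, establishes forward invariance of $\mathcal{F}_i$.
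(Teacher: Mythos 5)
Your treatments of Claims~\ref{Prop1.1}) and~\ref{Prop1.2}) are correct and essentially identical to the paper's (positivity and boundedness of the scalar factor $\lambda^2(1+\hat\sigma_i)^2$ on the compact feasible set, then invertibility of $\eta_i$); the only quibble is your assertion that $\hat r>0$ because ``the feasible set excludes the pole'' -- nothing excludes the disk's origin from $\mathcal{F}_i$, and the degeneracy of $\mathrm{diag}(1,\hat r^2)$ at $\hat r=0$ is a polar-chart artifact that should be dismissed as such rather than by a claim about $\mathcal{F}_i$. For Claim~\ref{Prop1.3}) the paper argues differently and more economically: since the composite weight $\hat\sigma_i$ (which already contains both the obstacle terms $\hat\sigma_0$ and the inter-agent terms $\hat\sigma_{ij}$) diverges on every component of $\partial\mathcal{F}_i$, the inverse metric $\eta_i^{-1}$ vanishes there, so the closed-loop field $u_i=-k_p\eta_i^{-1}\partial J/\partial p_i$ has no outward component at the boundary, and Nagumo's theorem gives forward invariance for obstacle and inter-agent boundaries in one stroke.

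Your two-stage alternative has genuine gaps. In stage one, the properness argument ($J(t)\le J(0)$ versus $J\to\infty$ as $p_i\to\partial O_k$) only forces a contradiction if $\int_{E_i}\rho\,dq>0$ on a set of points bounded away from the obstacle; since $\rho$ is merely nonnegative and the partition evolves in time, this needs to be stated and justified rather than assumed. Stage two is where the proof is actually missing: you explicitly defer ``the delicate technical step,'' which is precisely the crux of inter-agent invariance, and the mechanism you invoke is misdescribed. The metric $\eta_i=\lambda^2(1+\hat\sigma_i)^2\mathrm{diag}(1,\hat r^2)$ is a scalar (conformal) rescaling, so as $\hat d_{ij}\to 2r_a$ the inverse metric collapses isotropically -- there is no anisotropic ``collapse in the direction connecting agents $i$ and $j$'' to couple with a projection of $\partial J/\partial p_i$. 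Ironically, the isotropic collapse makes life easier: provided $\partial J/\partial p_i$ and $\partial J/\partial p_j$ stay bounded (they do, since $d_l$ contains no inter-agent terms), both $u_i$ and $u_j$ decay like $(1+\hat\sigma_{ij})^{-2}$, so a direct estimate on $\dot{\hat d}_{ij}$ (or your $\dot V_{ij}$) closes, or one simply invokes Nagumo as the paper does. Finally, your claimed conclusion $\hat d_{ij}(t)\ge 2r_a+\varepsilon$ uniformly in $t$ is stronger than forward invariance requires and stronger than your sketch would deliver; invariance of the open set $\mathcal{F}_i$ suffices (and the uniform clearance is, in the paper, the content of Assumption~\ref{Ass:Safety}, not a consequence of this proposition).
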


\begin{proof}
{For Claim \ref{Prop1.1}),}
as $\tau$ is a conformal mapping, its Jacobian $J_\tau$ is non-singular, ensuring conformal factor $\lambda^2 = |\det(J_\tau)| > 0$ on poly-annulus $S$. Since $\hat\sigma_i$ is a composition of continuous exponential functions and is finite on the compact set $\mathcal{F}_i$. Thus, the scalar scaling $\lambda^2(1+\hat\sigma_i)^2$ is strictly positive and bounded, ensuring $\eta_i \in \mathbb{S}_{++}^2$ ($2 \times 2$ symmetric positive definite) and its induced matrix norm $\|\eta_i\|$ is finite $\forall p_i \in \mathcal{F}_i$.
{For Claim~\ref{Prop1.2}),}
$\eta_i$ is non-degenerate according to Claim~\ref{Prop1.1}). The Riemannian gradient $\nabla_{\eta, p_i} J = \eta_i^{-1} \frac{\partial J}{\partial p_i}$ is the image of the differential $dJ$ under the isomorphism $T^*S \to TS$. Since $\eta_i^{-1}$ is an invertible linear operator, its kernel is trivial, ensuring the equivalence of the critical points.
{For Claim \ref{Prop1.3}),}
as $p_i$ approaches the boundary $\partial \mathcal{F}_i$, the divergence $\hat\sigma_i \to \infty$ ensures that $\eta_i^{-1}$ vanishes. The asymptotic behavior is governed by $\lim_{p_i \to \partial \mathcal{F}i} \eta_i^{-1} = \lim_{\hat\sigma_i \to \infty} {(1+\hat\sigma_i)^{-2}} \text{diag}(1, \hat{r}^{-2}) /{\lambda^2}= \mathbf{0}$. The vanishing of the inverse metric $\eta_i^{-1}$ at $\partial \mathcal{F}_i$ nullifies the normal component of $u_i$. Following Nagumo's Theorem~\cite{khalil}, the trajectory is confined to $\mathcal{F}_i$ for all $t \geq 0$.
\end{proof}
Given the continuous-time dynamics \eqref{partitioning dynamic} and \eqref{u}, we need to address their distributed implementation. The following subsection introduces a numerical scheme that projects Riemannian gradient flows onto a discrete-time execution framework. 

\subsection{Poriferous Coverage Algorithm}
To approximate the optimal solution to the constrained optimization problem~\eqref{minJ}, we develop the Poriferous Coverage Optimization Algorithm (i.e., Algorithm~\ref{Algorithm2}). This algorithm bridges the continuous geometric theory with a distributed computational implementation. Initially, a quantization parameter $\varepsilon_p$ is defined to discretize the partition process into $K^* = \lceil 2\pi/\varepsilon_p \rceil$ intervals. For each $k \in \{1, \dots, K^*\}$, the index set $\chi_k = \{ i \in \mathbb{I}_N \mid i = \arg\min_j |\hat{\psi}_j - 2\pi(k-1)/K^*|\}$ identifies the anchor agent closest to the reference phase. During the execution horizon $t < T_\epsilon$, the dynamics are updated based on agent roles. For the anchor agent $a \in \chi_k$: The partition phase is fixed as $\hat{\psi}_a = 2\pi(k-1)/K^*$ to serve as a spatial reference. Its mapped position $\hat{p}_a$ and physical position $p_a$ evolve according to the corresponding Riemannian gradient \eqref{u}. For all other agents $i \notin \chi_k$: The partition phases $\hat{\psi}_i$ are updated via the dynamics \eqref{partitioning dynamic}. Simultaneously, their positions are synchronized across $S$ and $\Xi$ through the pull-back length metric~\eqref{dl} and the conformal mapping $\tau$. Furthermore, the global cost $J^k$ for each discrete initialization is evaluated via a distributed communication mechanism~\cite{zhai23}. Finally, the system selects $k^* = \arg \min_k J^k$ to finalize the optimal configuration $(\hat{\mathbf{\psi}}^*, \mathbf{p}^*)$. Algorithm \ref{Algorithm2} relies on the rigorous interplay between virtual partition bars and multi-agent motion. To certify the reliability, we provide a theoretical analysis on the existence of solutions, the input-to-state stability (ISS) of partition dynamics, and the asymptotic convergence of the closed-loop system.

\begin{algorithm}[t!]
\caption{\label{Algorithm2} Poriferous Coverage Optimization Algorithm}
\renewcommand{\algorithmicrequire}{\textbf{Initialize:}}
\renewcommand{\algorithmicensure}{\textbf{Finalize:}}
{\bf Initialize:} $\mathcal{N}$, $\tau$, $K^*$, $T_{\epsilon}$, and $J^* \leftarrow \infty$.
\begin{algorithmic}[1]
\For {$k = 1$ to $K^*$}
\State Set $\hat{\psi}_k \leftarrow 2\pi(k-1)/K^*$ 
\State Set $a \leftarrow \text{arg}\min_{j \in \mathcal{N}} |\hat{\psi}_j -\hat{\psi}_k|$
\While {$t < T_{\epsilon}$}
\For {each agent $i \in \mathcal{N}$}
\State $\hat{\psi}_i \leftarrow \hat{\psi}_k$ if $i=a$, else evolve $\dot{\hat{\psi}}_i$ via \eqref{partitioning dynamic}
\State Construct $\hat{\Gamma}_i$ in $\Xi$ and compute $u_i$ via \eqref{u}
\State Update $p_i$ via \eqref{dp} and synchronize $\hat{p}_i = \tau(p_i)$
\EndFor \State {\bf end~for}
\State $t \leftarrow t + \Delta t$
\EndWhile \State {\bf end~while}
\State Update $(J^*, \hat{\psi}^*, p^*) \leftarrow (J, \hat{\psi}, p)$ via \eqref{cost} if $J < J^*$
\EndFor \State {\bf end~for}
\end{algorithmic}
{\bf Finalize:} Optimal configuration $(\hat{\psi}^{*}, p^{*})$
\end{algorithm}

\subsection{Convergence Analyses}
This subsection focuses on the convergence of the proposed coverage control algorithm. At first, we analyze the existence of solutions to Problem~\eqref{minJ}. 

\begin{proposition}\label{the1}
There always exist optimal solutions to Problem~\eqref{minJ} in poly-annulus $S$, and these solutions are reachable for multi-agent dynamics~\eqref{dp} with control input~\eqref{u}.
\end{proposition}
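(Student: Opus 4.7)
The plan is to decompose the proposition into the two independent assertions and treat them with standard but distinct tools: existence via a compactness/continuity argument on a suitably regular feasible set, and reachability via a Lyapunov/LaSalle argument built directly on the cost $J$, exploiting the geometric properties already established in Propositions \ref{thm1} and \ref{Prop1} and Lemma \ref{lemma 3.1}.

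For the existence claim I would argue as follows. The configuration space $[0,2\pi)^N \times S^N$ is, after identifying $\hat{\psi}_i$ modulo $2\pi$ and using periodicity, contained in a compact set (since $S$ is compact by hypothesis). Admissibility of $f_i$ (Assumption~\ref{assf}) and the smoothness of $\tau$ established in Section~\ref{section3} imply that the integrand of $J$ in \eqref{cost} is continuous in $(\hat{\boldsymbol{\psi}}, \mathbf{p})$, and the subregion boundaries $E_i(\hat{\boldsymbol{\psi}}) = \tau^{-1}(\hat{E}_i)$ depend continuously on $\hat{\boldsymbol{\psi}}$, so $J$ itself is continuous. The constraint $|\hat{m}_i - \hat{m}_j| \le \delta(\beta)$ defines a closed subset by continuity of the workload integrals \eqref{mi}. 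Non-emptiness of the feasible set is the delicate point: I would exhibit an explicit feasible configuration by choosing the uniform phase assignment $\hat{\psi}_i = 2\pi(i-1)/N$ and invoking Lemma~\ref{lemma 3.1} to show that the workload spread is bounded by $C_{\hat{\delta}} \beta$, so $\delta(\beta)$ can always be taken compatible with this profile. Weierstrass's extreme value theorem then furnishes a global minimizer $(\hat{\boldsymbol{\psi}}^*, \mathbf{p}^*)$.

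For the reachability claim I would use $J$ itself as a Lyapunov candidate, since under Assumption~\ref{assf} it is $C^2$ and bounded below by zero. Differentiating along the closed-loop dynamics \eqref{dp}, \eqref{partitioning dynamic}, and \eqref{u} gives $\dot{J} = \sum_i (\partial J/\partial p_i)^{\!\top}\dot{p}_i + \sum_i (\partial J/\partial \hat{\psi}_i)\dot{\hat{\psi}}_i$. For the position term, substituting \eqref{u} yields $-k_p \sum_i (\partial J/\partial p_i)^{\!\top}\eta_i^{-1}(\partial J/\partial p_i) \le 0$, because Proposition~\ref{Prop1}\eqref{Prop1.1}) guarantees $\eta_i \in \mathbb{S}_{++}^2$. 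For the partition term, by the standard calculation for sector partitions (as in the framework of~\cite{Cortes10,zhai23,porous}), the partial derivative $\partial J/\partial \hat{\psi}_i$ is proportional to the workload imbalance $(\hat{m}_{i-1}-\hat{m}_i)$ up to marginal density factors, so \eqref{partitioning dynamic} makes that term also non-positive (again up to a controllable perturbation $\hat{\delta}_i$ from the buffer arcs, bounded by Lemma~\ref{lemma 3.1}). Proposition~\ref{Prop1}\eqref{Prop1.3}) keeps trajectories inside the compact feasible set $\mathcal{F}_i$, so the $\omega$-limit set is non-empty, and LaSalle's invariance principle concludes that trajectories converge to $\{(\hat{\boldsymbol{\psi}},\mathbf{p}) : \partial J/\partial p_i = 0,\ \hat{m}_i = \hat{m}_{i-1}\ \forall i\}$, which is precisely the set of KKT points of Problem~\eqref{minJ}. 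Invoking Proposition~\ref{thm1}\eqref{thm1.2}) transports this conclusion back from $\Xi$ to $S$, so reachability in the original workspace follows.

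The main obstacle I anticipate is two-fold. First, the $\dot{J}$ computation is not entirely clean because the subregions $E_i(\hat{\boldsymbol{\psi}})$ depend on $\hat{\boldsymbol{\psi}}$ through the composition $\tau^{-1}\circ \hat{\Gamma}_i$, so the Leibniz/Reynolds-style differentiation must handle the modified bar $\hat{\Gamma}_i$ rather than the straight $\hat{B}_i$, producing exactly the perturbation term $\hat{\delta}_i$. I would absorb this by invoking Lemma~\ref{lemma 3.1} to bound $|\hat{\delta}_i|\le C_{\hat{\delta}}\beta$ and arguing an ISS-type estimate so that LaSalle still applies modulo a $\mathcal{O}(\beta)$ neighborhood of the critical set, which is consistent with the tolerance $\delta(\beta)$ in \eqref{minJ}. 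Second, non-emptiness of the feasible set of Problem~\eqref{minJ} together with the claim that the gradient flow is \emph{global} (not just trapped by isolated saddles) is subtle; I would finesse this by noting that the proposition only asserts reachability of the optimum, not exclusion of saddles, so any accumulation point in the critical set together with the explicit initialization sweep over $K^*$ anchors in Algorithm~\ref{Algorithm2} suffices to select the global minimizer among the critical points produced by the gradient flow.
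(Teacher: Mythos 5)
Your overall skeleton matches the paper's proof: compactness of the joint configuration space $S^N\times[0,2\pi]^N$ plus continuity of $J$ and the Weierstrass extreme value theorem for existence, then $J$ as a Lyapunov candidate with LaSalle's invariance principle for reachability, using the positive definiteness and boundary degeneracy of $\eta_i$ (Proposition~\ref{Prop1}) for forward invariance. (The paper additionally invokes cut-locus smoothness of $d_l$ and Hopf--Rinow geodesic completeness to make the gradient flow well-posed for all time; you omit this but it is easily cited.) The genuine gap is in your treatment of the partition term in $\dot J$. You assert that $\partial J/\partial\hat{\psi}_i$ is proportional to the workload imbalance $(\hat m_{i-1}-\hat m_i)$, so that the balancing law~\eqref{partitioning dynamic} renders that term non-positive. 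That is not what differentiating the moving boundary gives: as the paper itself computes in Theorem~\ref{the4}, $\nabla_{\hat{\psi}_i}J=\bigl(f(d_l(p_i,q))-f(d_l(p_{i-1},q))\bigr)\hat{\omega}(\hat{\psi}_i)$, i.e.\ the difference of the two adjacent agents' performance values on the bar weighted by the marginal density, and its product with $k_{\hat\psi}(\hat m_i-\hat m_{i-1})$ has no definite sign. Moreover this cross term is not an $O(\beta)$ quantity, so it cannot be absorbed into the buffer-arc perturbation bounded by Lemma~\ref{lemma 3.1}. The paper avoids the issue by not using $J$ as a joint Lyapunov function for the coupled $(\hat{\boldsymbol{\psi}},\mathbf{p})$ dynamics: the partition dynamics is analyzed separately (exponential ISS, Theorem~\ref{thee}), Lemma~\ref{LemmaMR} shows the metric becomes asymptotically static, and in Proposition~\ref{the1} only the position contribution $-k_p\sum_i\|\nabla_{\eta,p_i}J\|_{\eta_i}^2\le 0$ is counted, i.e.\ a quasi-static, hierarchical argument. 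To repair your version you would need either this time-scale separation or a composite Lyapunov function; the sign claim as stated would fail.

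A secondary, smaller issue: your explicit witness for non-emptiness of the constraint $|\hat m_i-\hat m_j|\le\delta(\beta)$, namely the uniform phases $\hat{\psi}_i=2\pi(i-1)/N$, does not balance workloads when $\hat{\rho}$ is nonuniform, so Lemma~\ref{lemma 3.1} alone cannot bound the spread by $C_{\hat\delta}\beta$ there. The natural witness is the workload-equalizing phase vector for the nominal bars $\hat B_i$, with Lemma~\ref{lemma 3.1} covering only the $O(\beta)$ deviation induced by the bypass arcs. Note that the paper's own proof sidesteps the constraint entirely and simply minimizes $J$ over the compact set, so your attention to feasibility is a genuine addition---it just needs the correct feasible point.
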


\begin{proof}
The surface $S$ is a compact 2D Riemannian manifold with a smooth boundary, which implies the joint configuration space $\mathcal{Q} = S^N \times [0, 2\pi]^N$ is compact because it is the Cartesian product of compact sets. The cost functional $J$ in~\eqref{cost} is defined via the length metric $d_l$, which is induced by the continuous and positive definite pull-back metric $\eta_i$. Consequently, $J$ is a continuous functional on the compact set $\mathcal{Q}$. By the \textit{Weierstrass Extreme Value Theorem}~\cite{absil}, $J$ must attain its global minimum, which confirms the existence of the optimal configuration $(\hat{\mathbf{\psi}}^*, \mathbf{p}^*)$. For the global reachability, by the properties of cut loci on Riemannian manifolds~\cite{Cut loci}, the distance function $d_l$ is smooth almost everywhere, ensuring that the Riemannian gradient $\nabla_{\eta} J$ is well-defined. In addition, since $(S, \eta_i)$ is a compact manifold, it is geodesically complete according to the \textit{Hopf-Rinow Theorem}~\cite{Riemannian geometry}. Furthermore, the metric singularity $\hat{\sigma}_i \to \infty$ at the boundary $\partial \mathcal{F}_i$ induces a repelling potential that renders the interior of the feasible set forward invariant~\cite{khalil}. Thus, the system trajectories $\mathbf{p}(t)$ globally exist for all $t \in [0, \infty)$. Finally, consider the coverage cost $J$ as a Lyapunov candidate. Its time derivative along the system trajectories is given by:
\begin{equation}
\frac{dJ}{dt} = \sum_{i=1}^N \langle \nabla_{\eta, p_i} J, \dot{p}_i \rangle{\eta_i} = -k_p \sum_{i=1}^N | \nabla_{\eta, p_i} J |{\eta_i}^2 \leq 0.
\end{equation}
By \textit{LaSalle's Invariance Principle}~\cite{khalil}, since trajectories are bounded and $\dot{J} \leq 0$, the state asymptotically converges to the largest invariant set $\{ (\hat{\boldsymbol{\psi}}, \mathbf{p}) \mid \nabla{\eta} J = 0 \}$. Given that local minima are stable equilibria of the gradient flow, the system converges to the optimal coverage configuration.
\end{proof}
While Proposition~\ref{the1} ensures the existence of optimal configurations on $(S, \eta_i)$, the trajectories of physical system are governed by the coupled dynamics of $\dot{\hat{\psi}}_i$ and $\dot{p}_i$. A significant challenge arises from the geometric regularization in Algorithm~\ref{Algorithm1}, which introduces a non-vanishing perturbation $\hat\delta(t)$ to the partition dynamics. To solve this problem, the following theorem establishes the robustness of partition dynamics against this modification. 
\begin{theorem} \label{thee}
The workload error is exponentially input-to-state stable with respect to the sequence factor $\beta$, satisfying $\|\mathbf{e}(t)\| \le \|\mathbf{e}(0)\| e^{-\gamma t} + {C_{\hat{\delta}} \beta}/{\gamma}$, where $\mathbf{e}(t)=\mathbf{m}(t)-\bar{m}\cdot\mathbf{1}_N$ with \({\bf{m}}(t) = {\left( {{m_1}(t),\cdots,{m_N}(t)}\right)^T}\), and $\gamma$ is the exponential convergence rate. Consequently, the ultimate error bound can be made arbitrarily small by refining $\beta$. 
\end{theorem}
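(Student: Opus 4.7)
The plan is to cast the partition dynamics as a weighted graph Laplacian flow perturbed by the geometric disturbance $\hat{\delta}(t)$ and then apply a standard ISS Lyapunov argument. First I would derive the nominal evolution of the workload vector $\mathbf{m}$. Differentiating $\hat{m}_i = \int_{\hat{\psi}_i}^{\hat{\psi}_{i+1}} \hat{\omega}(\theta)\,d\theta$ and substituting the phase law \eqref{partitioning dynamic} yields
\begin{equation*}
\dot{\hat{m}}_{i,\text{nom}} = k_{\hat{\psi}}\bigl[\hat{\omega}(\hat{\psi}_{i+1})(\hat{m}_{i+1}-\hat{m}_i) - \hat{\omega}(\hat{\psi}_i)(\hat{m}_i-\hat{m}_{i-1})\bigr],
\end{equation*}
which is precisely the action of a symmetric weighted Laplacian $A(\hat{\boldsymbol{\psi}})$ on a cycle graph with edge weights $k_{\hat{\psi}}\hat{\omega}(\hat{\psi}_i)$. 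Invoking the superposition from Section \ref{PD} and the bound from Lemma \ref{lemma 3.1}, I then write the full dynamics as $\dot{\mathbf{m}} = A(\hat{\boldsymbol{\psi}})\mathbf{m} + \hat{\delta}(t)$ with $\|\hat{\delta}(t)\| \le C_{\hat{\delta}}\beta$.

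Next I would translate to error coordinates. Because $\bar{m}$ is a conserved constant and $A\mathbf{1}_N = 0$ by construction, $\dot{\mathbf{e}} = A(\hat{\boldsymbol{\psi}})\mathbf{e} + \hat{\delta}(t)$, and moreover $\mathbf{e}(t) \in \mathbf{1}_N^\perp$ since the buffer-arc construction only redistributes workload. The crucial step is to secure a uniform spectral gap for $A$ on $\mathbf{1}_N^\perp$. By Proposition \ref{pro1}, the ordering invariance keeps $\hat{\boldsymbol{\psi}}(t)$ in a compact subset of $\mathcal{D}_{\text{order}}$; since $\hat{\omega}$ is continuous and strictly positive on the compact image of $\tau$, it admits a uniform lower bound $\omega_{\min} > 0$. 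The Fiedler estimate for a uniformly weighted cycle then gives $\lambda_{\max}(A|_{\mathbf{1}_N^\perp}) \le -\gamma$ with $\gamma := 2k_{\hat{\psi}}\omega_{\min}(1-\cos(2\pi/N)) > 0$.

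Then I would close the argument with the Lyapunov candidate $V(\mathbf{e}) = \tfrac{1}{2}\|\mathbf{e}\|^2$. Its derivative satisfies
\begin{equation*}
\dot{V} = \mathbf{e}^T A \mathbf{e} + \mathbf{e}^T \hat{\delta} \le -\gamma\|\mathbf{e}\|^2 + C_{\hat{\delta}}\beta\,\|\mathbf{e}\|,
\end{equation*}
so that $\tfrac{d}{dt}\|\mathbf{e}\| \le -\gamma\|\mathbf{e}\| + C_{\hat{\delta}}\beta$ holds almost everywhere. Applying the comparison lemma \cite{khalil} yields $\|\mathbf{e}(t)\| \le \|\mathbf{e}(0)\|e^{-\gamma t} + (C_{\hat{\delta}}\beta/\gamma)(1-e^{-\gamma t})$, which is dominated by the claimed bound and converges to the residual ball of radius $C_{\hat{\delta}}\beta/\gamma$ that can be shrunk arbitrarily by reducing $\beta$.

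The main obstacle is the state-dependence of $A(\hat{\boldsymbol{\psi}}(t))$: the Laplacian is not constant and the ISS argument only works if its spectral gap is uniformly bounded away from zero along trajectories. This is where Proposition \ref{pro1} is indispensable, because the strict preservation of phase ordering and non-vanishing adjacent workloads established there confines $\hat{\boldsymbol{\psi}}(t)$ to a compact set of admissible configurations, legitimizing the uniform bound on $\hat{\omega}$ and hence the uniform $\gamma$. A secondary subtlety is rigorously justifying that the buffer-induced perturbation $\hat{\delta}_i$ inherits the $\mathcal{O}(\beta)$ rate bound from the workload bound in Lemma \ref{lemma 3.1}; this follows because $\hat{\delta}_i$ is the time derivative of a geometric area whose magnitude is linear in $\beta$ and whose angular sweep rate is bounded on the compact configuration set.
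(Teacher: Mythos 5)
Your proposal is correct and follows essentially the same route as the paper: the perturbed weighted-Laplacian consensus flow on the cycle graph, a uniform spectral gap $\gamma = 2k_{\hat{\psi}}\,\omega_{\min}\bigl(1-\cos(2\pi/N)\bigr)$ obtained from the lower bound on the marginal density, the $O(\beta)$ disturbance bound inherited from Lemma~\ref{lemma 3.1}, and a comparison-lemma ISS estimate on $V=\tfrac{1}{2}\|\mathbf{e}\|^2$. The only notable difference is cosmetic and in your favor: you apply the comparison lemma directly to $\|\mathbf{e}\|$, which yields the stated additive bound $\|\mathbf{e}(0)\|e^{-\gamma t}+C_{\hat{\delta}}\beta/\gamma$ immediately, whereas the paper splits the dissipation with $\theta\in(0,1)$ and recovers the ultimate bound only in the limit $\theta\to 1$.
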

\begin{proof}
Construct the Lyapunov function as $V=\frac{1}{2}\|\mathbf{e}\|^2$. For the connected cycle graph $\mathcal{G}_N = (\mathcal{N}, \mathcal{E})$, the error dynamics obey $\dot{\mathbf{e}} = -k_{\hat{\psi}} \mathcal{L}_{\mathcal{G}}(\hat{\omega})\mathbf{e}$, where the weighted Laplacian $\mathcal{L}_{\mathcal{G}}$ is defined by
\begin{equation} \label{Laplacian}
[\mathcal{L}_{\mathcal{G}}]_{ij} =
\begin{cases}
\hat{\omega}_i + \hat{\omega}_{i+1}, & i = j \\
-\hat{\omega}_{\max(i,j)}, & (i, j) \in \mathcal{E} \\
0, & \text{otherwise}
\end{cases}
\end{equation}
The properties of $\mathcal{L}_{\mathcal{G}}$ are deatiled in Appendix~\ref{appLG}.
Under Assumption~\ref{Ass:Safety} ($\underline{\omega} > 0$), $\mathcal{L}_{\mathcal{G}}$ is symmetric positive semi-definite with $\text{ker}(\mathcal{L}_{\mathcal{G}}) = \text{span}(\mathbf{1}_N)$. Since $\mathbf{e} \perp \mathbf{1}_N$, invoking the \textit{Courant-Fischer Theorem}~\cite[Th. 4.2.2]{horn} yields 
$$
\dot{V}_{nom} = -k_{\hat{\psi}} \mathbf{e}^T \mathcal{L}_{\mathcal{G}}(\hat{\omega})\mathbf{e} \le -k_{\hat{\psi}} \underline{\omega} \lambda_2(\mathcal{L}_S) \|\mathbf{e}\|^2 \triangleq-\gamma\|\mathbf{e}\|^2,
$$ 
where $\mathcal{L}_S$ is the unweighted Laplacian of a cycle graph with algebraic connectivity $\lambda_2(\mathcal{L}_S) = 2(1 - \cos(2\pi/N))$ and $\gamma > 0$ denotes the exponential convergence rate.  Algorithm \ref{Algorithm1} introduces a disturbance $\|\hat{\delta}(t)\| \le C_{\hat{\delta}} \beta$. The actual derivative becomes $\dot{V} \le -\gamma \|\mathbf{e}\|^2 + C_{\hat{\delta}} \beta \|\mathbf{e}\|$. Splitting the dissipation with $\theta \in (0,1)$ gives 
$$\dot{V} \le -(1-\theta)\gamma \|\mathbf{e}\|^2 - \theta\gamma \|\mathbf{e}\| (\|\mathbf{e}\| - C_{\hat{\delta}} \beta / \theta\gamma).
$$ 
For $\|\mathbf{e}\| \ge C_{\hat{\delta}} \beta / \theta\gamma$, one has $\dot{V} \le -2(1-\theta)\gamma V$. By the \textit{Comparison Lemma}~\cite[Th. 4.19]{khalil}, the trajectory is bounded by 
$$
\|\mathbf{e}(t)\| \le \max \{ \|\mathbf{e}(0)\| e^{-(1-\theta)\gamma t}, C_{\hat{\delta}} \beta / \theta \gamma \}.
$$ 
By taking $\theta \to 1$, the error converges to 
$\limsup_{t\to\infty}\|\mathbf{e}(t)\|\le C_{\hat{\delta}}\beta/\gamma$, establishing exponential ISS with a controllable ultimate bound, where the adjustability of $\beta$ is rigorously guaranteed by Lemma~\ref{lemma 3.1}. 
\end{proof}



Based on the exponential ISS established in Theorem \ref{thee}, the workload error $e(t)$ is strictly bounded by the perturbation $\beta$. To ensure the global coverage performance, it is imperative that agent trajectories $p(t)$ in poly-annulus $S$ accurately track the evolving optimal configurations. Thus, we establish the stability of the Riemannian gradient flow \eqref{u} to guarantee the synchronization of physical states in the robot workspace with the virtual partitions in the ball world. 

\begin{theorem} \label{the3}
The control law \eqref{u} asymptotically stabilizes each agent to the Riemannian centroid $p_i^*$ of its subregion $E_i$ in poly-annulus $S$. Moreover, this equilibrium is locally exponentially stable.
\end{theorem}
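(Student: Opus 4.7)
The plan is to use the coverage cost $J$ itself as a Lyapunov candidate, exploiting the Riemannian gradient structure of the control law~\eqref{u}, and then to strengthen the asymptotic result via linearization around the equilibrium. The argument proceeds in four stages.

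First, I would verify that $J$ is a legitimate Lyapunov function on the feasible set. By Claim~\ref{Prop1.3} of Proposition~\ref{Prop1}, the trajectory $\mathbf{p}(t)$ starting in $\mathcal{F}_i$ stays in $\mathcal{F}_i$ for all $t\ge 0$, and by Claim~\ref{Prop1.1} the metric $\eta_i(p_i)$ is symmetric positive definite with bounded norm on this set. Differentiating $J$ along the closed-loop dynamics~\eqref{dp} with input~\eqref{u} yields
\begin{equation*}
\dot{J} = \sum_{i=1}^N \Bigl\langle \frac{\partial J}{\partial p_i},\, \dot p_i\Bigr\rangle = -k_p \sum_{i=1}^N \Bigl(\frac{\partial J}{\partial p_i}\Bigr)^{T}\eta_i^{-1}(p_i)\frac{\partial J}{\partial p_i} \le 0,
\end{equation*}
with equality only when $\partial J/\partial p_i=0$ for all $i$, thanks to the positive definiteness of $\eta_i^{-1}$ and Claim~\ref{Prop1.2}.

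Second, I would invoke LaSalle's Invariance Principle. Compactness of $S$ and the forward invariance of $\mathcal{F}_i$ guarantee precompact trajectories; together with $\dot J\le 0$ and the continuity of $\nabla_\eta J$ on the admissible configuration space, every solution converges to the largest invariant subset of $\{\mathbf{p}\mid \partial J/\partial p_i=0,\ \forall i\}$. Here I would couple this with Theorem~\ref{thee}: the partition phases $\hat{\boldsymbol{\psi}}$ themselves converge ISS-ly to an equilibrium (up to the $\beta$-order bound) so that the subregions $E_i(\hat{\boldsymbol\psi}(t))$ become asymptotically time-invariant, which justifies treating $E_i$ as frozen in the LaSalle limit set. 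Then, expanding $\partial J/\partial p_i$ under Assumption~\ref{assf}(2) and using the structure of the density-weighted integral over $E_i$, I would identify the stationarity condition $\int_{E_i}f_i'(d_l(p_i,q))\,\nabla_{p_i}d_l(p_i,q)\,\rho(q)\,dq=0$ as the defining equation of the Riemannian centroid $p_i^{*}$, thereby obtaining asymptotic convergence $p_i(t)\to p_i^{*}$.

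Third, for local exponential stability I would linearize the closed-loop system $\dot p_i=-k_p\,\eta_i^{-1}\,\partial J/\partial p_i$ about the equilibrium $(p_1^{*},\dots,p_N^{*})$. At $p_i^{*}$ the gradient vanishes, so the Jacobian reduces to $A_i=-k_p\,\eta_i^{-1}(p_i^{*})\,H_i^{*}$, where $H_i^{*}=\partial^{2}J/\partial p_i^{2}\big|_{p_i^{*}}$. Because $f_i$ is strongly convex by Assumption~\ref{assf}(2) and $d_l$ is a genuine length metric inherited from the positive definite pull-back $\eta_0$, a standard second-variation computation shows $H_i^{*}\succ 0$ on a neighborhood of $p_i^{*}$. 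Since $\eta_i^{-1}(p_i^{*})\succ 0$ as well, the product $\eta_i^{-1}H_i^{*}$ is similar to a symmetric positive definite matrix and hence has strictly positive eigenvalues, making $A_i$ Hurwitz. Local exponential stability then follows from the Lyapunov indirect method~\cite{khalil}. Finally, Proposition~\ref{thm1}, Claim~\ref{thm1.2} transfers these stability properties from $\Xi$ back to the physical workspace $S$ via the diffeomorphism $\tau$.

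The principal obstacle is the second stage: rigorously justifying that the limit set of the coupled $(\hat{\boldsymbol\psi},\mathbf{p})$ dynamics consists precisely of centroidal Voronoi configurations. The partition dynamics~\eqref{partitioning dynamic} and agent dynamics~\eqref{u} are bidirectionally coupled through the subregions $E_i(\hat{\boldsymbol\psi})$, so LaSalle's principle must be applied to the full cascaded system, and the time-varying nature of $E_i$ must be absorbed by invoking the ISS bound from Theorem~\ref{thee} to ensure the ``slow'' partition variables settle on a well-defined manifold of equilibria before the gradient flow in $\mathbf{p}$ is analyzed as an autonomous system. Care with the non-smoothness of $d_l$ on the cut locus is also required, but this can be handled by restricting attention to a neighborhood of $p_i^{*}$ where smoothness holds.
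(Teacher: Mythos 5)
Your proposal follows essentially the same route as the paper's proof: the cost $J$ serves as the Lyapunov candidate with $\dot J=-k_p\sum_i\|\nabla_{\eta,p_i}J\|_{\eta_i}^2\le 0$, convergence to the critical set is obtained from the gradient-flow dissipation (the paper simply delegates this to Proposition~\ref{the1} rather than re-running LaSalle), and local exponential stability follows from positive definiteness of the Hessian at the centroid plus linearization; your use of Theorem~\ref{thee} to freeze the subregions plays the role of the paper's quasi-static argument via Lemma~\ref{LemmaMR}. The one step where your sketch is materially thinner than the paper's is the assertion $H_i^{*}\succ 0$: strong convexity of $f_i$ and positive definiteness of the pull-back metric only control the term $f_i''\,\nabla d_l\otimes\nabla d_l$ in the second variation, while the remaining term $f_i'\,\nabla^2 d_l$ is not sign-definite in general, since the Hessian of a Riemannian distance function depends on curvature and proximity to the cut locus. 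The paper supplies exactly this missing ingredient by writing the Hessian explicitly in the ball world $\Xi$, invoking $\nabla^2 d_{\hat l}\succ 0$ within the convexity radius of $\Xi$, and then transporting positive definiteness back to $S$ through the congruence $\nabla^2 J(p^*)=J_\tau^T\nabla^2\hat J(\hat p^*)J_\tau$ and Sylvester's Law of Inertia; to make your ``standard second-variation computation'' rigorous you would need an analogous restriction or an explicit bound on $\nabla^2 d_l$. A minor inconsistency: having computed the Hessian directly on $S$, your closing appeal to Proposition~\ref{thm1} to transfer stability from $\Xi$ back to $S$ is redundant, whereas in the paper that transfer (via the Jacobian congruence) is precisely where the ball-world computation is converted into the statement on $S$.
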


\begin{proof}
Global asymptotic convergence of $p(t)$ to the critical set is established in Proposition \ref{the1}. Under the Riemannian gradient flow $\dot{p}_i = -k_p \nabla_{\eta, p_i} J$, the time derivative satisfies $\dot{J} = -k_p \sum_{i=1}^N \|\nabla_{\eta, p_i} J\|_{\eta_i}^2 \le 0$. The equilibrium condition $\nabla_{\eta, p_i} J = 0$ uniquely defines the set of Riemannian centroids $\{p_i^*\} \subset \mathcal{F}_i$. Crucially, the vanishing of $\eta_i^{-1}$ at $\partial \mathcal{F}_i$ ensures the forward invariance of the collision-free set $\mathcal{F}_i$, as proved in Proposition \ref{Prop1}.
To establish local exponential stability, we evaluate the positivity of the Hessian operator $\nabla^2 J$ at the equilibrium. According to Lemma \ref{LemmaMR}, the metric $\eta_i(t)$ asymptotically recovers a static manifold as partition dynamics~\eqref{partitioning dynamic} stabilizes. This justifies a quasi-static analysis of the agent dynamics. In $\Xi$. Under Assumption \ref{assf}, the Hessian $\nabla^2 \hat{J}_i$ for a fixed partition is given by 
$$
\nabla^2 \hat{J}_i(\hat{p}_i) = \int_{\hat{E}_i} (f''(d_{\hat{l}}) \nabla d_{\hat{l}} \otimes \nabla d_{\hat{l}} + f'(d_{\hat{l}}) \nabla^2 d_{\hat{l}}) \hat{\rho}(\hat{q}) d\hat{q}.
$$ 
Since $f' > 0$, $f'' > 0$, and $\nabla^2 d_{\hat{l}} \succ 0$ within the convexity radius of $\Xi$, it follows $\nabla^2 \hat{J}_i(\hat{p}_i^*) \succ 0$. By the pull-back $J = \hat{J} \circ \tau$, the Hessian at $p^*$ is transformed via the Jacobian of $\tau$: $\nabla^2 J(p^*) = (J_{\tau})^T \nabla^2 \hat{J}(\hat{p}^*) J_{\tau}$ with $J_{\tau}=\partial\tau/\partial p$. Since $\tau$ is a diffeomorphism, $J_{\tau}$ is non-singular. By \textit{Sylvester's Law of Inertia}~\cite{Riemannian geometry}, the positive definiteness of the operator is invariant under this congruence transformation, i.e., $\nabla^2 J_S(p^*) \succ 0$. Linearizing the Riemannian flow $\dot{p}_i = -k_p \nabla_{\eta, p_i} J$ around $p^*$ yields the system matrix $-k_p \nabla^2 J(p^*)$. Since $\nabla^2 J(p^*)$ is strictly positive definite, all eigenvalues lie in the open left-half complex plane, and $p_i^*$ is locally exponentially stable.
\end{proof}

The exponential ISS of partition dynamics~\eqref{partitioning dynamic} and the local stability of agent dynamics~\eqref{dp} ensure that the MAS tracks the Riemannian centroids $p_i^*$ with high fidelity. However, in practical implementations, the global coverage performance is influenced by the discrete properties of MAS and the finite sampling frequency. Based on the convergence results in Theorem~\ref{thee} and Theorem~\ref{the3}, we evaluate the gap between the actual cost $J(\hat{\psi}^A, p^A)$ and the theoretical optimum $J(\hat{\psi}^*, p^*)$ to establish the global quasi-optimality of the proposed algorithm.

\begin{theorem} \label{the4}
Algorithm \ref{Algorithm2} steers MAS to approximate the optimal configuration and ensures the coverage cost $J$ is minimized within 
an arbitrarily small tolerance.
\end{theorem}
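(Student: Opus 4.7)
The plan is to decompose the optimality gap $|J(\hat{\boldsymbol{\psi}}^A, \mathbf{p}^A) - J(\hat{\boldsymbol{\psi}}^*, \mathbf{p}^*)|$ into three independently controllable error sources and show each can be driven below an arbitrary tolerance $\epsilon$ by tuning a dedicated algorithmic parameter. The three sources are: (i) the grid discretization induced by the finite anchor set $\{2\pi(k-1)/K^*\}_{k=1}^{K^*}$, (ii) the steady-state partition bias induced by the sequence factor $\beta$, and (iii) the finite-horizon tracking residual governed by $T_\epsilon$.

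First, I would invoke Proposition~\ref{the1} to fix the existence of $(\hat{\boldsymbol{\psi}}^*, \mathbf{p}^*)$ and to upgrade the continuity of $J$ on the compact configuration space $\mathcal{Q} = S^N \times [0,2\pi)^N$ into a uniform modulus of continuity $\omega_J(\cdot)$. Together with the smoothness of the pull-back metric $\eta_i$ on $\mathcal{F}_i$ (Proposition~\ref{Prop1}), this will give $|J(\boldsymbol{x}) - J(\boldsymbol{y})| \le L_J \|\boldsymbol{x} - \boldsymbol{y}\|_{\mathcal{Q}}$ locally, so that the cost gap can be bounded by state distances.

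Next I would control each of the three terms separately. For the discretization error, since the optimal anchor phase $\hat{\psi}_1^* \in [0,2\pi)$ is approached to within $2\pi/K^*$ by some grid index $k^\circ$, the continuity modulus yields a grid gap $\omega_J(2\pi/K^*) \to 0$ as $K^* \to \infty$. For the partition bias, Theorem~\ref{thee} and Lemma~\ref{lemma 3.1} deliver $\limsup_{t\to\infty} \|\mathbf{e}(t)\| \le C_{\hat{\delta}}\beta/\gamma$, which translates through the Lipschitz bound of $J$ with respect to $\hat{\boldsymbol{\psi}}$ into a cost deviation of order $\mathcal{O}(\beta)$. For the tracking residual, Theorem~\ref{the3} establishes local exponential convergence of $\mathbf{p}(t)$ to the Riemannian centroids $\mathbf{p}^*_{\hat{\boldsymbol{\psi}}}$ induced by the current partition, so that $\|\mathbf{p}(T_\epsilon) - \mathbf{p}^*_{\hat{\boldsymbol{\psi}}(T_\epsilon)}\| \le C_p e^{-\alpha T_\epsilon}$ for constants $C_p, \alpha > 0$; this residual vanishes as $T_\epsilon \to \infty$. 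Combining through the triangle inequality and assigning $\epsilon/3$ to each component yields the desired bound $|J^A - J^*| < \epsilon$.

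The main obstacle will be the coupling between the partition dynamics~\eqref{partitioning dynamic} and the agent dynamics~\eqref{dp}: the centroids $\mathbf{p}^*_{\hat{\boldsymbol{\psi}}}$ themselves drift while the positions $\mathbf{p}(t)$ are still settling, which prevents a naive superposition of the two error bounds. To resolve this, I would frame the closed loop as a cascade where the slow partition variable acts as a disturbance on the faster agent dynamics. Using Lemma~\ref{LemmaMR} to certify that $\dot{\eta}_i \to 0$ as $\dot{\hat{\mathbf{p}}} \to 0$, together with the exponential ISS of Theorem~\ref{thee}, a standard cascade ISS argument (\cite{khalil}) will show that the composite trajectory inherits exponential convergence to an $\mathcal{O}(\beta + e^{-\alpha T_\epsilon})$ neighborhood of $(\hat{\boldsymbol{\psi}}^\circ, \mathbf{p}^\circ_{k^\circ})$. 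Because Algorithm~\ref{Algorithm2} selects $k^* = \arg\min_k J^k$ over the grid, the recorded cost $J^A$ is at least as small as $J^{k^\circ}$, closing the argument and proving that the coverage cost can be made arbitrarily close to $J^*$ by jointly refining $K^*$, $\beta$, and $T_\epsilon$.
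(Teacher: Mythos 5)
Your overall strategy matches the paper's: both proofs bound the optimality gap by a triangle-inequality decomposition into error sources, each controlled by one tunable parameter ($K^*$, $\beta$, and the residual from finite execution), and both lean on Proposition~\ref{the1}, Theorem~\ref{thee}, Lemma~\ref{lemma 3.1}, and Theorem~\ref{the3} for the individual bounds. The differences are in which terms get careful treatment. The paper splits the gap into (i) a time-discretization term bounded by a second-order covariant Taylor expansion, giving $0.5\,L_H\|\nabla_\eta J\|_{l,\max}^2(\Delta t)^2$ for the discrete retraction $p_{k+1}=\exp_{p_k}(-k_p\Delta t\,\nabla_\eta J)$, (ii) a quantization term $L_{\hat\psi}(2\pi/K^*)$ obtained via Stokes' theorem on $\partial\hat E_i$, and (iii) a $\beta$-term $L_g C_\delta\beta$ obtained by integrating $f(d_l)\rho$ over the perturbed domains $\Delta\mathcal{M}_i$ directly, while the finite-horizon residual is simply absorbed into an unanalyzed $\epsilon_{conv}$; it then closes with a quadratic-growth bound converting the cost gap into a state deviation. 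You, by contrast, treat the finite horizon $T_\epsilon$ and the partition--agent coupling much more explicitly (exponential decay plus a cascade ISS argument using Lemma~\ref{LemmaMR}), which is a genuine strengthening of the part the paper hand-waves, and your observation that $k^*=\arg\min_k J^k$ makes $J^A\le J^{k^\circ}$ is a clean way to pass from the best grid point to the algorithm's output.

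Two points need repair. First, you have no counterpart to the paper's $\Delta t$ term: Algorithm~\ref{Algorithm2} runs in discrete time ($t\leftarrow t+\Delta t$), so the exponential convergence you cite from Theorem~\ref{the3} applies to the continuous Riemannian gradient flow, not to the implemented discrete updates; without either a covariant Taylor / descent-lemma bound on the retraction error (as in the paper) or an explicit assumption that $\Delta t$ is part of the tolerance budget, your tracking-residual bound $C_p e^{-\alpha T_\epsilon}$ does not describe the trajectories the algorithm actually produces, and the claimed "arbitrarily small tolerance" cannot be reached by tuning $K^*$, $\beta$, $T_\epsilon$ alone. Second, your $\beta$-term converts the workload error $\limsup_t\|\mathbf{e}(t)\|\le C_{\hat\delta}\beta/\gamma$ into a cost deviation "through the Lipschitz bound of $J$ with respect to $\hat{\boldsymbol{\psi}}$"; this requires inverting the workload-to-phase map, i.e.\ a uniform lower bound $\underline{\omega}>0$ on the marginal density (implicit in Theorem~\ref{thee}), which you should state, or else bound the cost perturbation directly over the rerouted regions as the paper does via Stokes' theorem and Lemma~\ref{lemma 3.1}. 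With those two fixes the argument goes through and is, if anything, more complete than the paper's on the $T_\epsilon$ and coupling side.
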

\begin{proof}
Let $\mathcal{X} = (\hat{\psi}, p)$ denote the joint state on the product manifold $\mathbb{T}^N \times S^N$. By the triangle inequality, the cost deviation is decomposed into three distinct terms below
\begin{equation} \label{DJ}
\begin{aligned}
|J(\mathcal{X}^{*}) - J(\mathcal{X}^{A})| 
&\le  |J(\hat\psi^{*},p^{*}) - J(\hat\psi^{*},p^{k})| \\
& + |J(\hat\psi^{*},p^{k}) - J(\hat\psi^{k},p^{k})| \\
& + |J(\hat\psi^{k},p^{k}) - J(\hat\psi^{A},p^{A})| + \epsilon_{conv},\\
\end{aligned}
\end{equation}
where $\epsilon_{conv}$ represents the convergence residual due to the finite termination horizon $T_{\epsilon}$. {For the first term}, it quantifies the discrepancy between the continuous trajectory $p(t)$ and the discrete sequence $p^k$, which is updated via $p_{k+1} = \exp_{p_k}(-k_p \Delta t \nabla_{\eta} J)$~\cite{NB23}. Since the poly-annulus $S$ is compact, the forward invariance of $\mathcal{F}_i$ established in Proposition~\ref{Prop1} coupled with the \textit{ Hopf-Rinow Theorem}~\cite{Riemannian geometry} ensures that $S$ is geodesically complete, rendering the discrete update globally well-defined. Applying the second-order covariant Taylor expansion, the cost variation is dominated by the Riemannian Hessian $\nabla^2 J$. Invoking Assumption \ref{assf} ($f', f'' > 0$), the compactness of $S$ ensures a finite Hessian upper bound $L_{H} = \sup_{p \in S} \|\nabla^2 J(p)\|_l$, where $\|\cdot\|_l$ denotes the operator norm induced by the length metric $d_l(\cdot)$. $L_H$ explicitly incorporates the curvature of $S$ and the performance supremum $\bar{f}''=\sup{f}''$.Thus, one gets
\begin{equation}
|J(\hat{\psi}^{*}, p^{*}) - J(\hat{\psi}^{*}, p^{k})| \le 0.5 L_{H} \|\nabla_{\eta} J\|_{l, \max}^2 (\Delta t)^2,
\end{equation}
where the zeroth and first-order terms cancel exactly as the discrete retraction is intrinsically aligned with the Riemannian gradient flow.

{For the second term}, it stems from the quantization of the configuration torus $\mathbb{T}^N$ into $K^{*}$ intervals, which induces a geodesic deviation $d_{\mathbb{T}}(\hat{\psi}^{*}, \hat{\psi}^{k}) \le 2\pi/K^{*}$.  Invoking Assumption \ref{assf} ($f' > 0$), we apply the \textit{Stokes' Theorem}~\cite{Riemannian geometry} to the boundary $\partial \hat{E}_i$, the intrinsic gradient is $\nabla_{\hat{\psi}i} J = \left( f(d_l(p_i, q)) - f(d_l(p_{i-1}, q)) \right) \hat{\omega}(\hat{\psi}_i)$, where $\hat{\omega}(\hat{\psi}_i)$ denotes the marginal density at the $i$-th partition bar. Consequently, the quantization error is bounded by
\begin{equation}
|J(\hat{\psi}^{*}, p^{k}) - J(\hat{\psi}^{k}, p^{k})| \le L_{\hat{\psi}} (2\pi / K^{*}),
\end{equation}
where $L_{\hat{\psi}} = \sup_{\hat{\psi}} \|\nabla_{\hat{\psi}} J\|_l$ is constrained by $\bar{f}'=\sup{f}'$ and the maximum total variation of the Riemannian volume form across the boundaries $\partial \hat{E}_i$ in $\Xi$.

{For the third term}, it stems from the geometric regularization $\beta$ in Algorithm \ref{Algorithm2}. The rerouting of partition bars from $B_i$ to $\hat{\Gamma}_i$ induces a domain perturbation $\Delta \mathcal{M}_i \subset S$. By invoking \textit{Stokes' Theorem}~\cite{Riemannian geometry}, 
the cost deviation is
\begin{equation} \label{DJ_domain} 
\begin{aligned} |J(\mathcal{X}^{k}) - J(\mathcal{X}^{A})| &= \left| \sum_{i=1}^{N} \int_{\Delta \mathcal{M}i} f(d_l(p_i, q)) \rho(q) dV_l \right| \\ 
&\le L_g C_{\delta} \beta, 
\end{aligned} 
\end{equation}
where $L_g = \sup f(d_l)$ is the performance bound on $S$. In light of Lemma~\ref{lemma 3.1}, the workload deviation $|\Delta \hat{m}_i| \le C_{\delta} \beta$ linearly maps to the cost penalty via the conformal factor $\lambda^2$ of the pull-back volume form $dV_l = \det(\tau^* \hat{\eta}_0)^{1/2} dx$.
As established in Theorem \ref{the3}, $\nabla^2 J \succ 0$ at equilibrium ensures Geodesic Strong Convexity. For the performance function $f$ satisfying Assumption \ref{assf}, the quadratic growth condition on the manifold $S$ ~\cite{BS13} provides a rigorous bound on the state deviation in the length metric $d_l(\cdot)$:
\begin{equation}
d_l(\mathcal{X}^A, \mathcal{X}^*) \le \sqrt{2(J(\mathcal{X}^A) - J(\mathcal{X}^*)) / \lambda_{\min}(\nabla^2 J)},
\end{equation}
where $\lambda_{\min}(\nabla^2 J)$ is the minimum eigenvalue of the Riemannian Hessian.
\end{proof}

\subsection{Topological Universality}
The preceding analyses confirm the algorithm performance on the poly-annulus surface. However, the fundamental validity of this framework transcends specific obstacle shapes. As a result, we abstract the workspace into a Riemannian manifold $(\mathcal{M},\eta)$ and establish the maximal topological class for which the proposed diffeomorphic control is 
strictly applicable.

\begin{theorem} \label{the_topology}
The proposed framework is applicable to the maximal class of compact, orientable, 2-D Riemannian manifolds $(\mathcal{M}, \eta)$ with boundary, satisfying the Euler characteristic $\chi(\mathcal{M}) = 1 - n$ and Genus $g(\mathcal{M}) = 0$. For any $\mathcal{M}$ in this class, the distributed coverage problem is topologically conjugate to the harmonic gradient flow on the $n$-holed disk $\Xi$.
\end{theorem}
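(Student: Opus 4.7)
The plan is to separate the claim into two directions: \emph{sufficiency}, that every manifold in the specified class admits the diffeomorphic coverage framework; and \emph{maximality}, that no strictly larger topological class is admissible. I would begin by invoking the Classification Theorem for compact orientable 2-manifolds with boundary, which asserts that $\mathcal{M}$ is determined up to homeomorphism by the pair $(g,b)$ through $\chi = 2 - 2g - b$. Substituting $g(\mathcal{M})=0$ and $\chi(\mathcal{M})=1-n$ forces $b(\mathcal{M}) = n+1$, giving exactly one outer boundary plus $n$ inner hole boundaries. This matches precisely the topology of $\Xi$, so a homeomorphism $\mathcal{M} \to \Xi$ exists.

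To promote this homeomorphism to a conformal diffeomorphism, I would invoke the generalized Riemann Mapping Theorem (Koebe's theorem) for finitely connected planar-type Riemann surfaces, which guarantees a conformal equivalence onto a canonical circular domain. Algorithm~\ref{Algorithm3} provides a constructive realization via domain decomposition, partial and global welding, and the Linear Beltrami Solver; since each building block depends only on the genus-zero, finitely-connected structure (not on the specific shape of $\partial\mathcal{M}$ or $\partial O_k$), the algorithm produces $\tau: \mathcal{M} \to \Xi$ for any manifold in the class. Topological conjugacy of the closed-loop dynamics then follows directly from Proposition~\ref{thm1}: the Lyapunov identity $\dot{\mathcal{W}}(x) = \dot{\mathcal{V}}(\hat{\xi})$ transports asymptotic stability on $\mathcal{M}$ to harmonic gradient stability on $\Xi$, while Proposition~\ref{Prop1} ensures the forward invariance of $\mathcal{F}_i$ under the pull-back metric.

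For maximality, I would argue by topological obstruction. If $g(\mathcal{M}) \ge 1$, the fundamental group $\pi_1(\mathcal{M})$ contains non-abelian torus generators, whereas $\pi_1(\Xi)$ is the free group on $n$ generators; no diffeomorphism can reconcile these, and hence no global pull-back of the sectorial partition exists. If $g = 0$ but $\chi(\mathcal{M}) \neq 1-n$, then $b \neq n+1$, so the obstacle count assumed throughout Sections~\ref{section3} and~\ref{section4} is mismatched and the poly-annulus conformal construction admits no valid image. As a complementary certificate, I would apply the Poincaré--Hopf theorem to the pulled-back harmonic gradient vector field: the sum of its indices must equal $\chi(\mathcal{M})$, which is consistent with $\chi(\Xi) = 1-n$ if and only if $(\mathcal{M},\eta)$ belongs to the stated class.

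The main obstacle will be the maximality direction, since sufficiency flows cleanly from classical uniformization. The subtlety is distinguishing \emph{the framework fails} from \emph{the current proof technique fails}: I must rule out cleverer constructions (e.g., cut-and-paste or finite-cover liftings of higher-genus manifolds). The Poincaré--Hopf index argument above is the cleanest route, since it produces an invariant obstruction independent of the particular map $\tau$ chosen, thereby pinning down the genuine topological boundary of applicability.
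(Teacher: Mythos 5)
Your sufficiency direction is sound and takes a genuinely different route from the paper: you argue via the classification of compact orientable surfaces with boundary ($\chi = 2-2g-b$, so $g=0$, $\chi=1-n$ forces $b=n+1$) plus Koebe's generalized uniformization and then lean on Proposition~\ref{thm1} for conjugacy, whereas the paper constructs the required angular coordinate intrinsically, via Hodge theory: it notes $H^1_{dR}(\mathcal{M})$ is $n$-dimensional for this class, takes the harmonic representative $\omega_h$, realizes $\tau$ as its holomorphic integration so that the partition phase $\hat{\psi}$ is a period integral and the boundaries $\partial E_i$ are harmonic level sets, and then uses Poincar\'e--Hopf only to argue that the equilibrium set of the pushed-forward gradient field is preserved. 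Your version buys brevity by outsourcing existence to classical uniformization; the paper's version buys the specific structural fact (existence of the global phase 1-form) on which the sectorial partition actually depends, which is also what its maximality argument turns on.

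The genuine gap is in your maximality direction. First, the $\pi_1$ obstruction is misstated: every compact orientable surface with non-empty boundary has free fundamental group (of rank $2g+b-1$), so the dichotomy ``non-abelian torus generators vs.\ free group on $n$ generators'' does not exist; indeed a genus-$1$ surface with $b$ boundary components has $\pi_1$ free of rank $b+1$, which is isomorphic to $\pi_1(\Xi)$ when $n=b+1$, so $\pi_1$ alone cannot separate the cases --- you need the classification theorem (genus as a homeomorphism invariant) or a finer invariant. Second, and more seriously, your ``cleanest route,'' the Poincar\'e--Hopf certificate, does not deliver the claimed if-and-only-if: the index sum only sees $\chi(\mathcal{M})$, and $\chi = 1-n$ is also attained by higher-genus surfaces with fewer boundary components (e.g.\ $g=1$, $b=n-1$), so the Euler characteristic cannot detect genus and cannot rule out the ``cleverer constructions'' you yourself flag. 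The paper's maximality argument is different and avoids this: for $g\ge 1$ the handle cycles contribute classes to $H^1_{dR}(\mathcal{M})$ that do not correspond to obstacles, so a single global angular ordering $\hat{\psi}\in[0,2\pi)$ (the object the whole sectorial-partition framework needs) cannot index the topology without branch cuts. To repair your proof you would need to replace both the $\pi_1$ claim and the Poincar\'e--Hopf certificate with an argument of this cohomological type (or at minimum invoke the classification theorem correctly and then argue separately why no adapted version of the sectorial phase can exist on a handle).
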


\begin{proof} The proof relies on De Rham Cohomology theory to establish the existence of the coordinate system required by partition dynamics~\eqref{partitioning dynamic} and agent model~\eqref{dp}. First of all, we prove the {cohomological existence of partition phase \(\psi\)}. The validity of sectorial partition depends on a global angular coordinate $\hat{\psi}$ that uniquely indexes the topology. In terms of differential forms, this requires a non-trivial closed 1-form $\omega \in H^1_{dR}(\mathcal{M})$. For Genus-0 manifolds with $n+1$ boundary components, $H^1_{dR}(\mathcal{M})$ is an $n$-dimensional vector space. By the \textit{Hodge Decomposition Theorem}~\cite{Gu08}, there exists a unique harmonic 1-form $\omega_h$ ($\Delta \omega_h = 0$). Then, the conformal mapping $\tau: \mathcal{M} \to \Xi$ is the holomorphic integration of $\omega_h$, where the partition phase $\hat{\psi}$ is the period integral of the pull-back form $\tau^* \hat{\omega}$. This ensures that boundaries $\partial E_i$ are harmonic level sets, maintaining orthogonality to the Riemannian gradient flow. Then, we analyze the {topological conjugacy of agent dynamics}. Agent motion is governed by the vector field $X = -\nabla_{\eta} J$ on $\mathcal{M}$. The conformal diffeomorphism $\tau$ induces an isomorphism between tangent bundles $T\mathcal{M}$ and $T\Xi$. The push forward operator $\tau_*$ maps the gradient field to $X_{\Xi} = -\nabla_{\hat{\eta}} \hat{J}$ on $\Xi$ via $X_{\Xi} = \lambda^{-2} \tau_* X$, with $\lambda^2$ being the conformal factor. By the \textit{Poincaré-Hopf Theorem}~\cite{Gu08}, the sum of vector field indices equals $\chi(\mathcal{M})$. Since $\chi(\mathcal{M}) = \chi(\Xi) = 1 - n$, the equilibrium set (centroids) remains a topological invariant. Thus, the convergence established in Theorem \ref{the3} on $\Xi$ extends to any $\mathcal{M}$ within this class, regardless of boundary irregularity. Finally, we analyze the {genus constraint}. The restriction to genus $g(\mathcal{M}) = 0$ is rigorous. If $g(\mathcal{M}) > 0$, the non-commutative fundamental group $\pi_1(\mathcal{M})$ introduces handle cycles in $H^1_{dR}(\mathcal{M})$ that do not correspond to obstacles. A 1-dimensional angular ordering $\hat{\psi} \in [0, 2\pi)$ fails to characterize the state relative to these handles without complex branch cuts. Thus, $\chi(\mathcal{M}) = 1 - n$ with Genus 0 defines the upper bound for 1-dimensional sectorial coverage.
\end{proof}

The theoretical analysis constitutes a rigorous chain of logical deductions, guaranteeing the system reliability across geometric, dynamic, and computational implementation. First, Proposition \ref{the1} establishes the topological conjugacy and existence of the coordinate system via the conformal mapping $\tau$, while the metric pull-back $\eta = \tau^* \hat{\eta}_0$ preserves the intrinsic geometry of the poly-annlus. Building on this, Lemma \ref{lemma 3.1} and Proposition \ref{pro1} confirm the strict geometric safety of sectorial partitions, ensuring that the boundaries $\hat{\Gamma}_i$ are harmonic level sets that remain valid under constraints. Regarding control feasibility, Proposition \ref{Prop1} guarantees the non-degeneracy and forward invariance of the feasible set $\mathcal{F}_i$, ensuring that the Riemannian gradient flow is globally well-defined and collision-free. This is further supported by Lemma \ref{LemmaMR}, which proves the metric regularity of the pull-back manifold. This indicates 
that it asymptotically recovers a static structure as the agents converge. Subsequently, the stability analysis reveals a hierarchical convergence structure. To be specific, Theorem \ref{thee} proves that the virtual partition dynamics $\hat{\psi}$ exhibit exponential ISS against geometric perturbations $\beta$, as supported by the contraction properties in Lemma \ref{lemma 3.1}. Coordinated with the robust partitions, Theorem \ref{the3} guarantees that the multi-agent kinematics $p$ asymptotically converge to the optimal Riemannian centroids with local exponential rates. Theorem \ref{the4} bridges the gap between continuous theory and discrete implementation by validating that the numerical outputs strictly approximate the theoretical optimum $\mathcal{X}^*$ within a bounded error determined by $(\Delta t, K^*, \beta)$. Finally, Theorem \ref{the_topology} establishes the topological universality of the framework and demonstrates that the gradient flow remains valid for the maximal class of Genus-0 manifolds, regardless of metric distortion and boundary complexity.  


\section{Numerical Simulation} \label{section5}

To validate the effectiveness of the coverage control appraoch (Algorithms \ref{Algorithm3}, ~\ref{Algorithm1}, and ~\ref{Algorithm2}), numerical simulations are conducted on the poriferous region $S$ embedded in $\mathbb{R}^3$, which is modeled as a triangular mesh with 1844 vertices.

\subsection{Nominal Coverage Performance}
In the nominal scenario, the MAS consists of $N=6$ agents tasked with covering the poriferous region $S$ with a non-uniform density $\rho(\theta, r) = \exp(\sin^2\theta + \cos\theta) + 0.01r$. Based on~\eqref{Metric_Xi}, we obtain the Riemannian metric \(\hat\eta\) of the ball world \(\Xi\). Through conformal mapping \(\tau\), we obtain the Riemannian metric of poriferous region \(S\). Furthermore,  the length metric in \(\Xi\) and \(S\) are obtained by~\eqref{dl}. The simulation is initialized with the parameters listed in Table~\ref{table_parameters} and the selection criteria for simulation parameters are provided in Appendix~\ref{apc}. The evolution of the MAS in $\Xi$ and $S$ are illustrated in Fig.~\ref{Fig.3} and Fig.~\ref{Fig.4}, respectively. In these snapshots, the black radial lines represent the boundaries of the sectorial partition, green stars indicate the optimal centroids $\mathbf{c}_i^*$, and circles denote the current agent positions $p_i$. The sectorial partition boundaries successfully bypass the topological holes, ensuring each subregion remains simply connected. As shown in Fig.~\ref{Fig.5}, the workload converges to an equilibrium of $1.0477$ and the phase angles converge to constants. The control inputs asymptotically vanish, which confirms that the control law~\eqref{u} drives the agents to their optimal positions. According to Theorem~\ref{the4}, these positions are sufficiently close to the optimal centroids. It is worth noting that the analysis of algorithm computational complexity and the scalability of MAS is provided in Appendix~\ref{app3}. The simulation results illustrate the effectiveness of the proposed coverage control framework.
\begin{table}[!t]
\renewcommand{\arraystretch}{1}
\caption{Simulation Parameters and Configurations}
\label{table_parameters}
\centering
\begin{tabular}{ccl}
\toprule
\textbf{Symbol} & \textbf{Value} & \textbf{Physical Meaning} \\
\midrule      
$N$             & 6              & Total number of agents \\
$K^*$           & 30             & Centroid tracking gain \\
$T_\varepsilon$ & 40             & Convergence threshold \\
$k_{\hat{\psi}}$& 0.2            & Partition gain \\
$k_p$           & 0.12           & Agent gain \\
$\beta$         &0.005           & Sequence factor \\
\bottomrule
\end{tabular}
\end{table}
\begin{figure}[t!]
\centering
\includegraphics[width=0.48\textwidth]{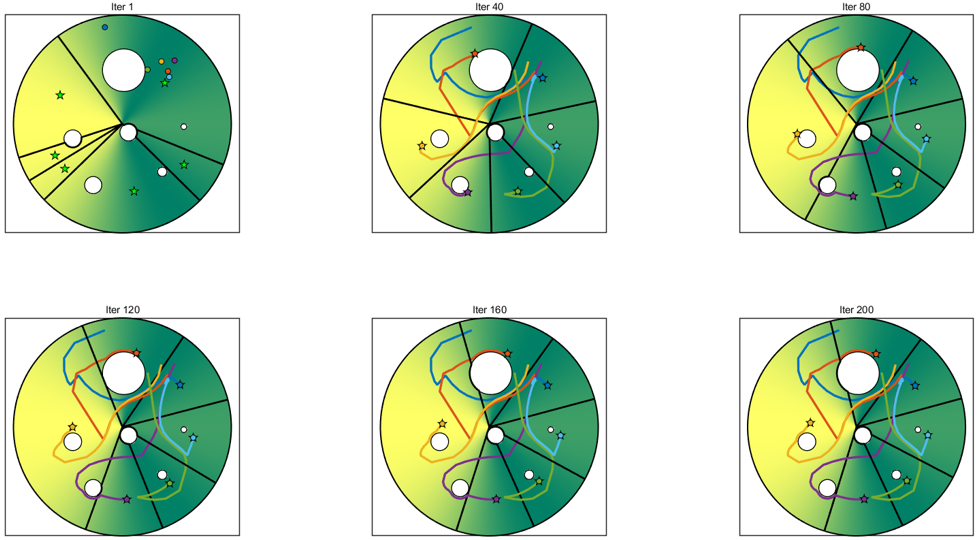}
\caption{\label{Fig.3} Temporal evolution of the MAS in the ball world $\Xi$. Black radial lines denote the safe sectorial partitions, circles represent agent positions, and green stars indicate the optimal Riemannian centroids. The partition bars strictly bypass the circular obstacles, ensuring topological safety. } \label{Fig.3}
\end{figure}

\begin{figure}[t!]
\centering
\includegraphics[width=0.48\textwidth]{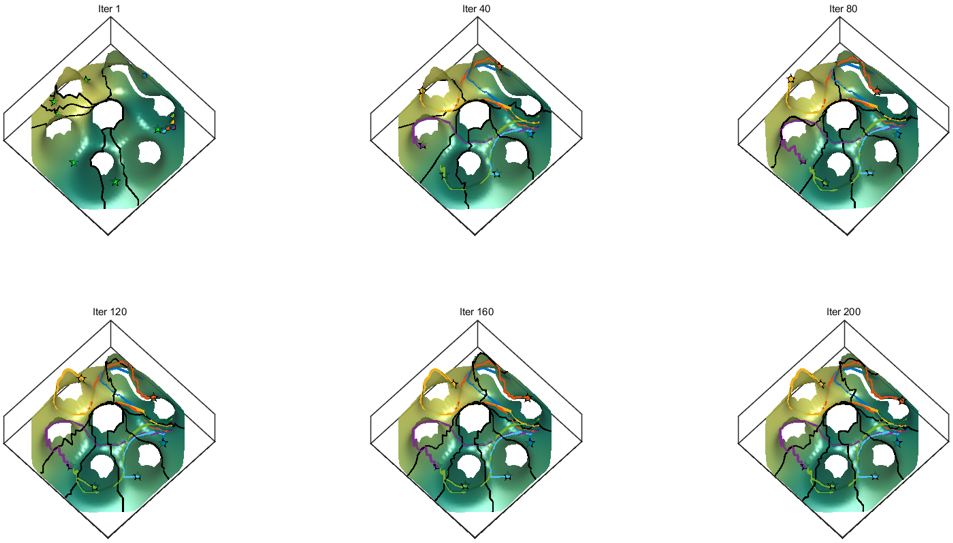}
\caption{\label{Fig.4} Corresponding coverage evolution in the poly-annulus $S$. Through the inverse conformal mapping $\tau^{-1}$, the linear sectors in $\Xi$ are transformed into curvilinear regions in $S$ that naturally conform to the irregular obstacle boundaries while maintaining simply connected topologies.} \label{Fig.4}
\end{figure}

\begin{figure}[t!]
\centering
\includegraphics[width=0.48\textwidth]{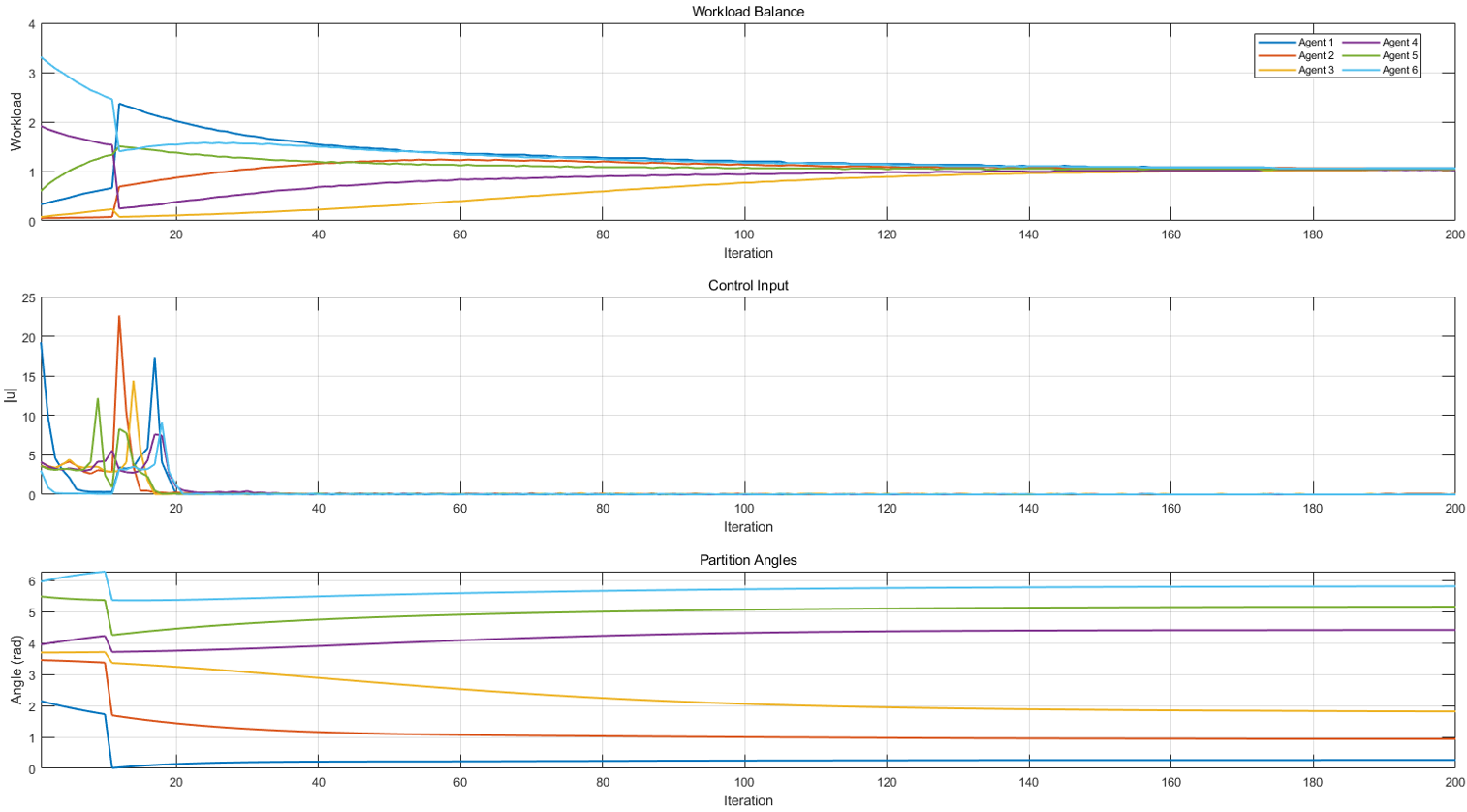}
\caption{\label{Fig.5} Evolution of performance indices under nominal conditions: (Top) Convergence of individual agent workloads $m_i$ to the equilibrium; (Middle) Asymptotic decay of control inputs $u_i$, indicating arrival at optimal centroids; (Bottom) Stabilization of partition phases $\psi_i$.} \label{Fig.5}
\end{figure}
\subsection{Robustness under Agent Failure}
To evaluate its robustness, a hardware contingency is simulated at $Interation=200$, where Agents 4, 5, and 6 cease operation. Upon failure, the remaining active agents $\mathcal{A}$ must redistribute the workload previously managed by the failed agents. As shown in the latter half of Fig.~\ref{Fig.6} and Fig.~\ref{Fig.7}, immediately following the failure, the partition boundaries automatically expand to close the gap. And the coverage performance indexes under agent failures is illustrated in Fig.~\ref{Fig.8}. In the normal situation, the workload converges to an equilibrium value of approximately $1.0477$ , and the phase angles converge to constants.  The control input $u$ for all agents asymptotically decays to zero. Following the failure of half of the agents, the workload $m$ converges to an equilibrium value of approximately $2.0953$ and the phase angles $\psi$ also converge to constants. The control input $u$ for all agents asymptotically decays to zero again. Although the workload per agent increases to a new equilibrium, the system maintains stability. Notably, the stability analysis of the system under agent failures is provided in Appendix ~\ref{app3}. This demonstrates that the distributed framework can autonomously re-balance the workload without requiring global reconfiguration of the conformal map $\tau$.

The numerical performance of the proposed coverage control approach is summarized in Table~\ref{table_metrics}, which evaluates the system across three distinct stages: the Nominal case (nominal operation with $N=6$), and the Pre-Fault and Post-Fault stages (contingency scenario with $N=6$ and $N=3$, respectively). And the indices presented include: $\bar{m}$ (i.e., the average workload assigned to each active agent), Imbalance (i.e., the workload distribution ratio $\max(m)/\min(m)$), RMSE (i.e., the root mean square error of the workload distribution relative to the equilibrium), $n$ (i.e., the number of convergence iterations) and $t_{avg}$ (i.e., the average computation time per agent per iteration).

\begin{table}[!h]
\renewcommand{\arraystretch}{1.3}
\caption{Summary of Numerical Performance Indices}
\label{table_metrics}
\centering
\footnotesize 
\begin{tabular*}{\columnwidth}{@{\extracolsep{\fill}}lcccccc}
\toprule
\textbf{Scenario} & \bm{$\bar{m}$} & \textbf{Imbalance} & \textbf{RMSE} & \textbf{Iters ($n$)} & \bm$t_{avg}$ \textbf{(s)} \\
\midrule
Nominal    & 1.0477 & 1.0017 & $6.75 \times 10^{-3}$ & 134 & 0.0196 \\
Pre-Fault  & 1.0477 & 1.0016 & $5.91 \times 10^{-3}$ & 115 & 0.0199 \\
Post-Fault & 2.0953 & 1.0009 & $8.21 \times 10^{-3}$ & 76  & 0.0213 \\
\bottomrule
\end{tabular*}
\end{table}

\begin{figure}[t!]
\centering
\includegraphics[width=0.48\textwidth]{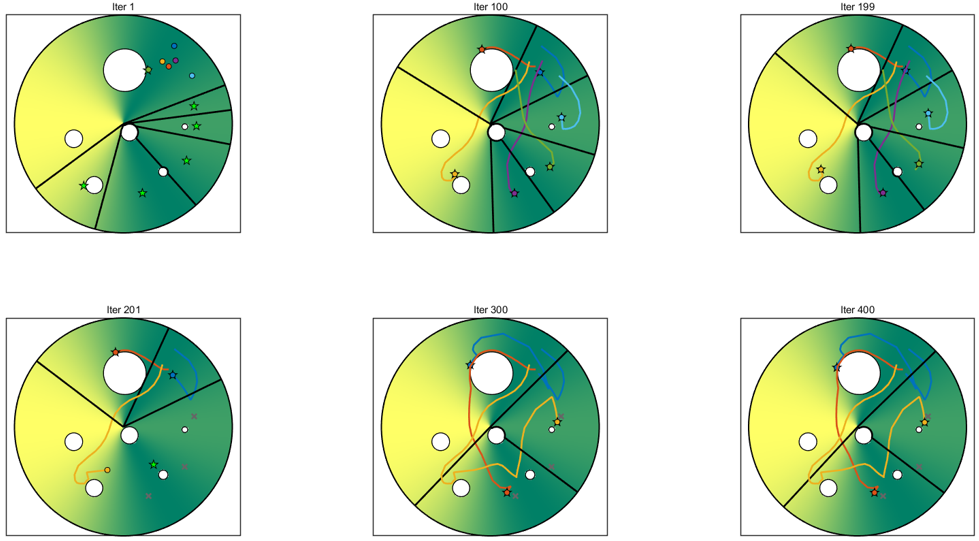}
\caption{\label{Fig.6} Self-healing response in $\Xi$ under agent failure at $t=200$. The active agents autonomously expand their sectorial angles to cover the regions vacated by the failed agents, maintaining global coverage continuity.} \label{Fig.6}
\end{figure}

\begin{figure}[t!]
\centering
\includegraphics[width=0.48\textwidth]{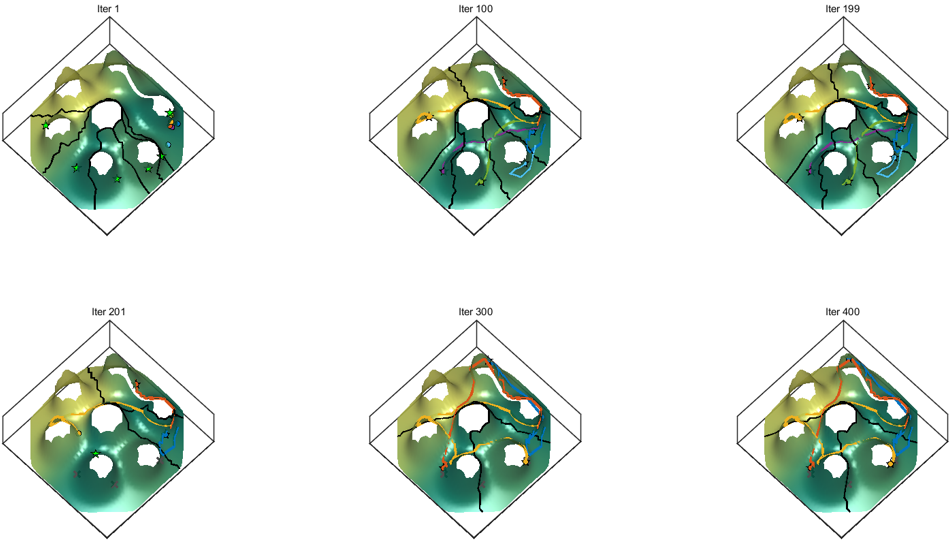}
\caption{\label{Fig.7} Reconfiguration of the MAS in $S$ following agent failure. The remaining agents navigate across the manifold to redistribute the global workload, demonstrating robustness without requiring global re-mapping.} \label{Fig.7}
\end{figure}

\begin{figure}[t!]
\centering
\includegraphics[width=0.48\textwidth]{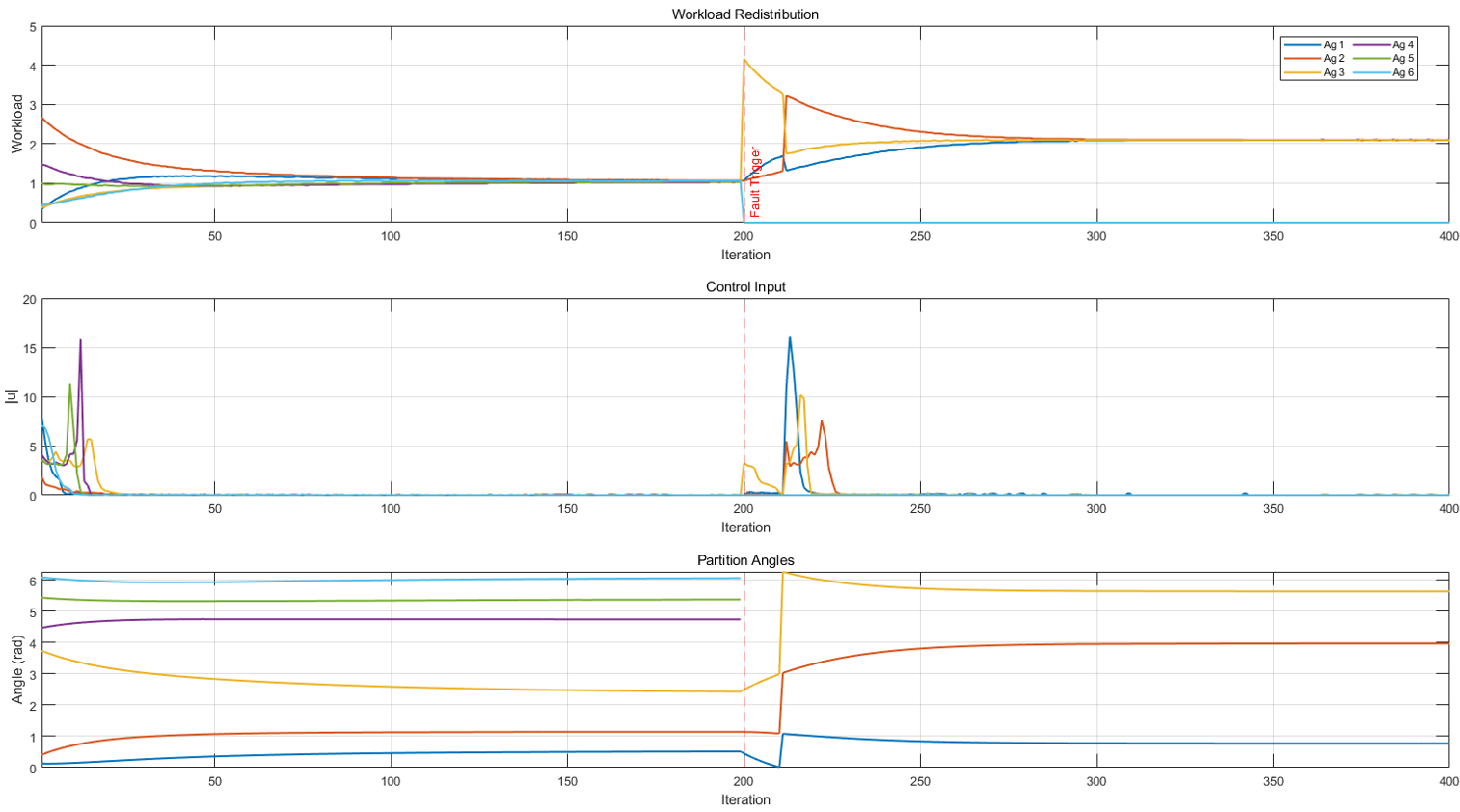}
\caption{\label{Fig.8} Performance metrics during the failure recovery process. The dashed line at $t=200$ marks the fault occurrence. The system exhibits a rapid transient response, where workloads $m_i$ re-converge to a new, higher equilibrium, verifying the automatic rate compensation capability.} \label{Fig.8}
\end{figure}

\subsection{Comparison with Baseline Method}
Finally, we compare the proposed approach with the Hierarchical Coverage Control (HCC) algorithm~\cite{porous}, which relies on Voronoi-based partition. As illustrated in Fig.~\ref{Fig.9}, when applied to the poly-annulus manifold $S$, the traditional Euclidean Voronoi partition fails to account for the poriferous topology. This leads to topological disconnection, where multiple obstacles or holes are trapped within a single Voronoi cell, or a cell is split into disconnected components by a hole. Consequently, the optimal centroids fall inside obstacles or in inaccessible regions across a hole. Under these conditions, MAS models with non-holonomic constraints cannot achieve collision avoidance or reach the assigned centroids, leading to local minima. In contrast, our sectorial partition with conformal mapping ensures that each subregion remains connected and topologically simple, effectively bypassing the inherent defects of Voronoi partition in non-convex poriferous regions.
\begin{figure}[t!]
\centering
\includegraphics[width=0.48\textwidth]{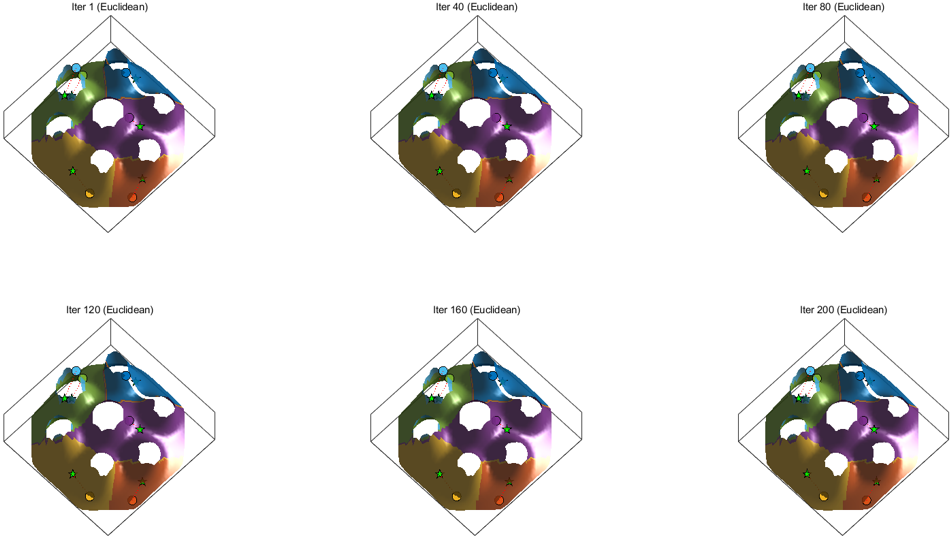}
\caption{\label{Fig.9} Failure case of the baseline Voronoi partition~\cite{porous} on $S$. The metric distortion causes topological disconnection (e.g., a single cell spanned across holes), leading to unreachable centroids trapped inside obstacles.} \label{Fig.9}
\end{figure}

\section{Conclusion} \label{section6}
This paper developed a diffeomorphic control framework that rigorously resolved the topological obstacles inherent in multi-agent coverage on poriferous surfaces. By constructing a distributed poly-annulus conformal mapping, we established a topological conjugacy that transform arbitrary multiply-connected region into the $n$-holed disk. Based on the conformal mapping, the closed-loop dynamics exhibited a hierarchical convergence. The partition dynamics achieved exponential Input-to-State Stability against geometric perturbations, while the agent dynamics asymptotically converged to the optimal Riemannian centroids. Crucially, these global guarantees were achieved through a fully distributed domain decomposition to parallel partial welding, ensuring the framework's universality and scalability across a broad class of Genus-0 surfaces. Ultimately, this work proved that intricate non-convex coverage tasks could be resolved via intrinsic gradient flows on regularized coordinate systems. Future work will focus on the time-varying environments.


\appendices
\section{Partial Welding Example}\label{app1}  
To illustrate the mathematical mechanism of partial welding, we present a intuitive example. Reconstruct the topological connectivity of a slit domain $\mathcal{D} = \mathbb{C} \setminus [-i, i]$ by welding its separated boundaries.
Let the slit $[-i, i]$ be regarded as two distinct boundary segments: $\gamma^+ = \{ iy + \epsilon \mid y \in [-1, 1] \}$, and $\gamma^- = \{ iy - \epsilon \mid y \in [-1, 1] \}$, where $\epsilon \to 0^+$. The welding mapping $T: \mathcal{D} \to \Omega$ is defined by the elementary function $w = T(z) = \sqrt{z^2 + 1}$. This mapping transforms the slit domain $\mathcal{D}$ into the $\Omega = \mathbb{C} \setminus (-\infty, 0]$. Crucially, it enforces the boundary identification condition:
\begin{equation}
T(iy^+) = T(iy^-) = \sqrt{1-y^2}, \quad \forall y \in [-1, 1].
\end{equation}
This implies that the disjoint banks $\gamma^+$ and $\gamma^-$ are mapped to the exact same locus in $\Omega$, thereby welding the cut back together. In our proposed framework, the optimization of $\Theta_{uv}$~\eqref{lwe})serves as the numerical inverse operator of such welding. It explicitly identifies the boundary correspondence:
\begin{equation}
h_u(\partial S_u) \cong \Theta_{uv}(h_v(\partial S_v)),
\end{equation}
which effectively restoring the continuous topology of poly-annulus $S$ from the decomposed local disks.

\section{Geometric Computation of Safe Boundary}\label{app2} 
This appendix details the numerical generation of the partition bar $\hat{\Gamma}_i$ in $\Xi$, consisting of linear segments and circular arcs. For intersection identification, the $i$-th nominal partition bar is parameterized as a vector $\hat{\mathbf{x}}(\hat\xi) = \hat\xi \hat{\mathbf{b}}_i$, where $\hat\xi \in [0, 1]$. The $k$-th obstacle is defined by its center $\hat{o}_k$ and radius $\hat{r}_k$. The intersection points are determined by the Euclidean constraint $\|\hat{\mathbf{x}}(\hat\xi) - \hat{o}_k\|^2 = \hat{r}_k^2$, which yields the quadratic equation:
\begin{equation}
|\hat{\mathbf{b}}_i|^2 \hat\xi^2 - 2 (\hat{\mathbf{b}}_i \cdot \hat{o}_k) \hat\xi + (|\hat{o}_k|^2 - \hat{r}_k^2) = 0.
\end{equation}
Solving for $\hat\xi$ gives the entry and exit parameters $\hat\xi_{in} < \hat\xi_{out}$, with corresponding coordinates $\hat{p}_{in} = \hat\xi_{in} \hat{\mathbf{b}}_i$ and $\hat{p}_{out} = \hat\xi_{out} \hat{\mathbf{b}}_i$. To bypass the obstacle, a circular arc is generated on the buffer circle with radius $\hat{r}_{k,i} = \hat{r}_k(1 + i\beta)$. The entry and exit angles relative to $\hat{o}_k$ are computed as $\hat\psi_{in} = \text{atan2}(\hat{p}_{in} - \hat{o}_k)$ and $\hat\psi_{out} = \text{atan2}(\hat{p}_{out} - \hat{o}_k)$. The safe arc $\hat{A}_{k,i}$ is then discretized via angular interpolation:
\begin{equation}
\hat{\mathbf{x}}_{arc}(u) = \hat{o}_k + \hat{r}_{k,i} \begin{bmatrix} \cos(\hat\psi_{in} + u \frac{\Delta \tilde{\hat\psi}}{U-1}) \ \\sin(\hat\psi_{in} + u \frac{\Delta \tilde{\hat\psi}}{U-1}) \end{bmatrix},
\end{equation}
where $u \in \{0, \dots, U-1\}$. This ensures $\hat{\Gamma}_i$ is piecewise smooth and strictly disjoint from obstacles.

\section{Spectral Properties of $\mathcal{L}_{\mathcal{G}}(\hat{\omega})$} \label{appLG}
For the weighted Laplacian $\mathcal{L}_{\mathcal{G}}(\hat{\omega})$ defined in \eqref{Laplacian}, under $\hat{\omega}_k \ge \underline{\omega} > 0, \forall k \in \mathcal{N}$, the properties hold:
\begin{enumerate}
\item $\mathcal{L}_{\mathcal{G}} = \mathcal{L}_{\mathcal{G}}^T$ and $\mathcal{L}_{\mathcal{G}} \succeq 0$.
\item $\text{ker}(\mathcal{L}_{\mathcal{G}}) = \text{span}(\mathbf{1}_N)$, implying $\sum_{i=1}^N \dot{\hat{m}}_i = 0$.
\item $\lambda_2(\mathcal{L}_{\mathcal{G}}) \ge \underline{\omega} \lambda_2(\mathcal{L}_S) = 2\underline{\omega}(1 - \cos(2\pi/N)) > 0$.
\end{enumerate}
\begin{proof}
{Positive Semi-definiteness:} Symmetry follows from the undirected adjacency in $\mathcal{G}_N$. For any $x \in \mathbb{R}^N$, the quadratic form is $x^T \mathcal{L}_{\mathcal{G}} x = \sum_{k=1}^N \hat{\omega}_k (x_k - x_{k-1})^2 \ge 0$, where $x_0 \equiv x_N$. Then, $\mathcal{L}_{\mathcal{G}} \succeq 0$.
{Null Space:} $x^T \mathcal{L}_{\mathcal{G}} x = 0 \iff x_k = x_{k-1}$ for all $k$. Hence, $\text{ker}(\mathcal{L}_{\mathcal{G}}) = \text{span}(\mathbf{1}_N)$.
{Connectivity:} Since $\underline{\omega} > 0$, the cycle graph $\mathcal{G}_N$ is connected. By the Courant-Fischer Theorem~\cite[Th. 4.2.2]{horn}, the second smallest eigenvalue satisfies $\lambda_2(\mathcal{L}_{\mathcal{G}}) \ge \underline{\omega} \lambda_2(\mathcal{L}_S) > 0$, which establishes the exponential stability on the subspace $\mathbf{1}_N^{\perp}$.
\end{proof}

\section{Parameters Design}  \label{apc}
The centroid tracking gain $K^*$ and agent gain $k_p$ are tuned to ensure that agents navigate poly-annlus $S$ toward their respective centroids, while strictly adhering to the Riemannian metric constraints. The partition gain $k_{\hat{\psi}}$ is determined by the spectral properties of the communication topology to guarantee the exponential decay of workload imbalances across the multi-agents. To maintain topological connectivity in the presence of holes, the sequence factor $\beta$ is introduced, which ensures that the obstacle-avoiding partition bars remain disjoint and strictly separated from the boundary components in $\Xi$. Finally, the convergence threshold $T_\varepsilon$ is calibrated to define the steady-state precision of the coverage mission, ensuring the terminal configuration satisfies the quasi-optimal workload distribution.

\section{Algorithm Analysis} \label{app3}
This section evaluates the computational efficiency, scalability and robustbness of the proposed diffeomorphic coverage control framework.
\subsection{Computational Complexity}
The framework decouples offline mapping from online distributed execution to ensure real-time performance:
\begin{enumerate}
    \item {Mapping (Algorithm~\ref{Algorithm3}):} Solving the Laplace equation for the conformal map $\tau$ on $S$ with $v$ vertices across $M$ subdomains incurs a complexity of $O((V \cdot M^{-1})^{1.5})$.
    \item {Partition (Algorithm~\ref{Algorithm1}):} The linear intersection checks between $N$ partition bars and $n$ obstacles yield a complexity of $O(n)$ per control cycle.
    \item {Control (Algorithm~\ref{Algorithm2}):} Each iteration involves $O(D_{\mathcal{G}})$ communication rounds, where $D_{\mathcal{G}}$ is the graph diameter.
\end{enumerate}
The total online complexity is $O(n + K^* \cdot D_{\mathcal{G}})$. As shown in Table~\ref{table_metrics}, the average execution time $t_{avg} \approx 0.02$ s per agent, supporting high-frequency deployment.
\subsection{Scalability Analysis}
The poriferous geometry of $S$ imposes a theoretical limit on the agent population $N$ to maintain topological disjointness and ISS stability. Given a sequence factor $\beta$, the maximal agent capacity $N_{max}$ is determined by:
\begin{equation} \label{Nlimit}
N_{max} \le \min \{ 0.5 \hat{d}_{min} \cdot (\hat{r}_{max} \beta)^{-1}, \gamma\epsilon_{max} \cdot (C_{\hat{\delta}} \beta)^{-1} \},
\end{equation}
where $\hat{d}_{min}$ is the minimum obstacle separation, $\hat{r}_{max}$ is the maximum obstacle radius, and $\epsilon_{max}$ is the allowable workload error. The first term in \eqref{Nlimit} ensures the non-overlapping of buffer zones ($\hat{r}_k (1 + N \beta) - \hat{r}_k \le 0.5 \hat{d}_{min}$), while the second term guarantees the ultimate workload error $\| \mathbf{e}(\infty) \| \le C_{\hat{\delta}} \beta \cdot \gamma^{-1}$ remains within $\epsilon_{max}$, where $C_{\hat{\delta}}$ is the geometric sensitivity defined in Lemma~\ref{lemma 3.1}. This bound defines the fundamental trade-off between coverage precision $\beta$ and the swarm scale $N$ on poly annulus $S$.

\subsection{Robustness Analysis}  \label{af}
Consider a failure at $t = T_f$ where a subset of agents $N_{\mathcal{F}}$ ceases operation, resulting in an active set $\mathcal{A}$ with size $N_{\mathcal{A}} = N - N_{\mathcal{F}}$. Then, the new equilibrium workload is $\bar{m}_{\mathcal{A}} = M_{total} \cdot N_{\mathcal{A}}^{-1}$. Define the error vector $\mathbf{e} \in \mathbb{R}^{N_{\mathcal{A}}}$ with components $e_i = \hat{m}_i - \bar{m}_{\mathcal{A}}$, satisfying $\mathbf{e} \perp \mathbf{1}_{N_{\mathcal{A}}}$. Under the reduced topology, the error dynamics in $\Xi$ follow the Riemannian Laplacian flow $\dot{\mathbf{e}} = -k_{\hat{\psi}} \mathcal{L}_{\mathcal{G}}(\hat{\omega}) \mathbf{e}$, where $\mathcal{L}_{\mathcal{G}}(\hat{\omega})$ is the weighted Laplacian induced by the metric $\hat{\eta}$. For $V = \|\mathbf{e}\|^2/2$, its derivative satisfies
\begin{equation}
\dot{V} = -k_{\hat{\psi}} \mathbf{e}^T \mathcal{L}_{\mathcal{G}}(\hat{\omega}) \mathbf{e} \le -k_{\hat{\psi}} \cdot \underline{\omega} \cdot \lambda_2(\mathcal{L}_{S, N_{\mathcal{A}}}) \cdot |\mathbf{e}|^2,
\end{equation}
where $\underline{\omega} > 0$ is the minimum marginal density. The exponential convergence rate is $\gamma_{\mathcal{A}} = 2 k_{\hat{\psi}} \cdot \underline{\omega} \cdot ( 1 - \cos ( 2\pi \cdot N_{\mathcal{A}}^{-1} ) )$. Since $N_{\mathcal{A}} < N$, the spectral property of the cycle graph ensures $\gamma_{\mathcal{A}} > \gamma_{\mathcal{N}}$. The system's re-balancing speed automatically increases as the swarm size reduces, ensuring that the error $\|\mathbf{e}(t)\| \le \|\mathbf{e}(T_0)\| \exp( -\gamma_{\mathcal{A}} (t - T_f) )$ swiftly decays.  This intrinsic self-healing property allows the remaining agents to reach new Riemannian centroids without manual reconfiguration of the map $\tau$ or the Riemannian metric $\hat{\eta}$.

\end{document}